\def\PR{ {\scalebox{.5}{\textbf{PR}}}  }
\def\v{{\mathbf v}} \def\y{{\mathbf y}} \def\x{{\mathbf x}}
  \def\e{{\mathbf e}}
\def\s{{\mathbf s}}  \def\g{{\mathbf g}}
 \def\RR{{\mathbb R}} \def\A{{\mathcal A}}
\def\V{{\mathcal V}} \def\P{{\mathcal P}} 
\def\bar{\overline} \def\tilde{\widetilde}
\title{Extrapolation Methods for fixed-point Multilinear {P}age{R}ank computations}
\author{Stefano Cipolla\footnotemark[1] , Michela Redivo-Zaglia\footnotemark[1] , Francesco Tudisco\footnotemark[2]}
\newtheorem{definition}{Definition}
\newtheorem{proposition}{Proposition}
\newtheorem{theorem}{Theorem}
\newtheorem{lemma}{Lemma}
\begin{document}
	\maketitle
	\renewcommand{\thefootnote}{\fnsymbol{footnote}}
	\footnotetext[1]{Department of Mathematics ``Tullio Levi-Civita'', University of Padua, Padova, Italy (\texttt{stefano.cipolla@unipd.it}, \texttt{michela.redivozaglia@unipd.it}) }
	\footnotetext[2]{School of Mathematics and Statistics, University of Edinburgh, EH9 3FD  Edinburgh, UK (\texttt{tudisco.francesco@gmail.com})}

	\begin{abstract}{Nonnegative tensors arise very naturally in many applications that involve large and complex data flows. Due to the relatively small requirement in terms of memory storage and number of operations per step, the (shifted) higher-order power method is one of the most commonly used technique for the computation of positive $Z$-eigenvectors of this type of tensors.  However, unlike the matrix case, the method may fail to converge even for irreducible tensors. Moreover, when it converges, its convergence rate can be very slow. These two drawbacks often make the computation of the eigenvectors   demanding or unfeasible for large problems. In this work we consider a particular class of nonnegative tensors associated to the multilinear {P}age{R}ank modification of higher-order Markov chains. Based on the simplified topological $\varepsilon$-algorithm in its restarted form, we introduce an extrapolation-based acceleration of power method type algorithms, namely the shifted fixed-point method and the inner-outer method. The accelerated methods show remarkably better performance, with faster convergence rates and reduced overall computational time. Extensive numerical experiments on synthetic and real-world datasets demonstrate the advantages of the introduced extrapolation techniques.}

\end{abstract}
\begin{keywords}tensor,  multilinear PageRank, graphs, higher-order Markov chains,  extrapolation methods, acceleration of convergence, fixed-point, Spacey Random Surfer, higher-order power method\end{keywords}


\section{Introduction}

A higher-order Markov chain with memory of length $m-1$ is a stochastic process $\{S_t\}_{t=0}^\infty$ over a finite state space $V=\{1,\dots,n\}$ where the conditional probability distribution of the next state in the process depends only on the last $m-1$ states.  As for standard chains with memory of length 1, a higher-order Markov chain with a memory of length $m-1$ can be represented by the $m$-order transition tensor 
\begin{equation*}
p_{{i_1},{i_2 \dots, i_{m}}}:=\mathbb{P}(S_{t}={i_1}|S_{t-1}={i_2},\cdots,S_{t-(m-1)}={i_{m}}),\qquad \forall \;t \, .
\end{equation*}

 Note that by definition we obtain  $p_{{i_1},{i_2 \dots, i_{m}}} \geq 0$ and	$\sum_{{i_1}=1}^np_{{i_1},{i_2 \dots, i_{m}}}=1$ for all $({i_{2}, \dots, i_{m}})$. The stationary distribution $s_{i_1,\dots,i_{m-1}}$ of the chain is thus given by
\begin{equation}\label{eq:ho-stationary-dist}
\sum_{i_m=1}^n p_{i_1,\dots,i_m}s_{i_2,\dots,i_{m}} = s_{i_1,\dots,i_{m-1}} \, .
\end{equation}
As on the one hand, a longer memory has the advantage of allowing  more accurate models and thus may offer additional predictive value, on the other hand, computing a stationary distribution  for a Markov chain with memory length $m-1$ requires the computation of $n^{m-1}$ parameters and thus the memory requirement and the computational cost to treat  the 
 stationary distribution \eqref{eq:ho-stationary-dist} become extremely demanding already for  moderate values of $m$ and $n$ \cite{wu2017markov}. %

In this work we are particularly interested in {P}age{R}ank-type chains.
In the standard single-order setting, given a random walk on a directed graph {with $n$ nodes, the {P}age{R}ank modification builds a new Markov chain that  has always a unique stationary distribution. {Since its introduction \cite{page1999pagerank}, the analysis of the PageRank random walk and its stationary distribution has given rise to a wide literature, {with applications going further beyond the initial treatment of the web hyperlinks network \cite{gleichReview}.} In particular, the very large size of typical problems where the PageRank is applied has led to the development of several algorithms for its numerical treatment, ranging from specialized versions of the power, Jacobi and Richardson methods \cite{berkhin2005survey,cipolla2017euler,tudisco2011preconditioning,del2005fast,kamvar2004adaptive}, to algebraic and Krylov-type methods \cite{golub2006arnoldi,serra2005jordan}. In particular, the use of extrapolation strategies has shown to be very effective in this context \cite{brezinski2006pagerank,brezinski2008rational,brezinski2005extrapolation,kamvar2003extrapolation}.}%

Recently, the PageRank idea has been extended to higher-order Markov chains \cite{gleich2015}.
However, due to the exponential complexity required by the higher-order setting, the authors further introduce a computationally tractable approximation of the true higher-order {P}age{R}ank distribution, called multilinear {P}age{R}ank.
From the algebraic point of view, the multilinear {P}age{R}ank  seeks a rank-one stationary distribution of the higher-order chain, i.e., the joint probability $s_{i_1,\dots,i_{m-1}}$ solution of \eqref{eq:ho-stationary-dist} is replaced by a rank-one tensor $s_{i_1}s_{i_2}\cdots s_{i_{m-1}}$. This formulation dramatically reduces the dimension of the solution set from $n^{m-1}$ to $n$. Moreover, it has been proved in \cite{li2014limiting} that, under the assumption $s_{i_1,\dots,i_{m-1}}= s_{i_1}s_{i_2}\cdots s_{i_{m-1}}$, the solution of \eqref{eq:ho-stationary-dist} coincides with a $Z$-eigenvector of the transition tensor, given by
\begin{equation}\label{eq:Z-eig}
\sum_{i_{2},\dots,i_{m}}p_{i_1,\dots,i_m}s_{i_2}\cdots s_{i_m} = s_{i_1},\qquad \forall i_1 = 1,\dots,n\, .
\end{equation}

Motivated by the success  that extrapolation methods have had in the computation of the single-order PageRank, in this work we propose and investigate the use of extrapolation methods, based on Shanks transformations \cite{brezinski2018shanks,shanks1955non}, for the particular setting of tensor $Z$-eigenpairs problems arising from the analysis of the multilinear {P}age{R}ank.
More precisely, after a short survey on results about  existence and uniqueness of the solution and on the state-of-the-art fixed-point computational methods for the multilinear {P}age{R}ank vector, we will show how its computation can be considerably sped-up using extrapolation techniques. In particular, we will show that the sequences generated by the two fixed-point-type techniques \textit{Shifted Higher-Order Power Method}  \cite{gleich2015,kolda2011shifted} and \textit{Inner-Outer Method} \cite{gleich2015}, are strongly accelerated 	using the \textit{The Simplified Topological $\varepsilon$-Algorithm} (STEA) \cite{brezinski2014simplified,brezinski2017simplified} in the restarted form.
Alongside fixed-point methods, techniques based on the Newton's method have been proposed for instance in \cite{gleich2015,meini2017perron}. 
The  results presented in this work  show that the use of the extrapolation framework based on the Simplified Topological $\varepsilon$-Algorithm  allows us to improve the efficiency and the robustness of fixed-point iterations without resorting on the Newton's framework which requires, for large scale problems, a severe computational cost.

The use of simplified topological extrapolation techniques for this type of problem is, to our knowledge, a useful and effective novelty. For example, it gives a positive answer to the question (quoting from \cite{kolda2011shifted}) 
{\it Can the convergence rate of the current SS-HOPM  method be accelerated?} The introduction of the extrapolation framework and the acceleration techniques here considered not only produce a  relevant speed-up but  also enhance the robustness of the method without modifying the overall computational cost of the underlying iterative procedure (see Section \ref{sec:topological_epsilon_algorithm}).
Moreover, as the multilinear {P}age{R}ank is an instance of the many eigenvector problems that correspond to nonnegative  tensors \cite{gautier2018unifying}, the results proposed here can be transferred  to a variety of data analysis applications  where nonnegative tensors  and their spectra  play an important role. Examples, involving nonnegative tensors in general and the multilinear {P}age{R}ank in particular, include ranking of nodes in multilayer networks \cite{ng2011multirank,tudisco2017node,arrigo2018multi}, data clustering  \cite{benson2015tensor,wu2017markov}, hypergraph matching \cite{chertok2010efficient,nguyen2017efficient}, computer vision \cite{shashua2005non} and image reconstruction \cite{soltani2016tensor}. Although much has been done in  recent years, much is still unknown for tensor eigenpairs, including the development of fast algorithms for their computation. To this end,  higher-order and nonlinear versions of the classic power method  have been proposed  to address the computation of different types of tensors eigenpairs (see e.g. \cite{friedland2013perron,gautier2018perron,gautier2018unifying,kofidis2002best,kolda2011shifted,ng2009finding}) and, due to the large size of typical problems, enhancing the efficiency and robustness of these methods is of utmost importance.

The reminder of this paper is organized as follows: in Section \ref{sec:theoretical_background} we introduce and review all the necessary theory concerning the multilinear PageRank problem and related computational { techniques and we   analyze the existence and uniqueness issue of the multilinear Pagerank problem}.  In Section \ref{sec:extr_framework}, after introducing and reviewing the topological extrapolation framework, we describe our proposed simplified topological extrapolation method with restart and detail how we apply it to  the specific multilinear PageRank case;
finally,  in Section \ref{sec:numerical_results} we present extensive numerical results on synthetic and real-world datasets that showcase the effectiveness and the advantages of our approach.

\section{Multilinear {P}age{R}ank} \label{sec:theoretical_background}
\subsection{Notation}	
We say that $\mathcal A$ is a real cubical tensor of order $m$ and dimension $n$ if $\mathcal A$ is a multi-dimensional array with real entries  such that
$$
\mathcal A = (a_{i_1,\dots,i_m}) \quad \text{with} \quad  i_1,\dots,i_m \in \{1,\dots,n\} \, .
$$
Given a vector $\s\in \RR^n$ we denote by $\mathcal A\s^{m-1}\in \RR^n$ the vector with entries
$$
(\mathcal A \s^{m-1})_{i_1} = \sum_{i_2,\dots, i_m=1}^n a_{i_1,i_2,\dots, i_m}s_{i_2}\dots s_{i_m}, \qquad i_1=1,\dots, n
$$
A vector $\s = (s_i)\in \RR^n$ is said to be stochastic or, equivalently, a probability distribution, if $s_i\geq 0$ for all $i=1,\dots, n$ and $\sum_i s_i = 1$. Similarly, a real cubical tensor $\mathcal A$ is said to be stochastic if its entries are nonnegative numbers and the entries on the first mode of $\mathcal A$  sum up to one, namely

\begin{equation*}
a_{i_1,\dots, i_m} \geq 0, \quad \forall i_1,\dots,i_m\in \{1,\dots,n\}\qquad \text{and} \qquad \sum_{i_1=1}^n a_{i_1,\dots, i_m}=1 \, .
\end{equation*} 	
Unlike the matrix case, a number of different types of tensor eigenvectors can be defined, see for instance \cite{gautier2018unifying} and the references therein. 

Here we are concerned with $Z$-eigenvectors: a number $\lambda \in \mathbb C$ is an $E$-eigenvalue  of $\A$ with $E$-eigenvector $\s\in \mathbb C^n$ if it holds
\begin{equation}\label{eq:Z-eig}
\A \s^{m-1} = \lambda \, \s \qquad \text{and} \qquad \|\s\|_1=\sum_{i=1}^n|s_i| = 1 \,.
\end{equation}
A real $E$-eigenvector $\s\in \mathbb R^n$ is called $Z$-eigenvector and the corresponding $\lambda$ a $Z$-eigenvalue. We point out that, although the normalization constraint on $\s$ in the definition \eqref{eq:Z-eig} is more commonly given in terms of the Euclidean norm $\|\cdot \|_2$, we require here the $1$-norm for the sake of convenience. In fact, this is the choice that is more natural when dealing with stochastic tensors, as in that case it easy to see that  the only $Z$-eigenvalue $\lambda$ of a nonnegative $Z$-eigenvector is $\lambda = 1$. For reference, we also point out that $\s$, solution of \eqref{eq:Z-eig}, is sometimes called $Z_1$-eigenvector to underline the choice of the one norm (see e.g. \cite{chang2013uniqueness}). We avoid this additional notation, for the sake of simplicity.

Finally, as for nonnegative matrices, the concept of (nonnegative) irreducible tensor is important in order to show existence and uniqueness of nonnegative tensor eigenvectors. Again, unlike the matrix case, the concept of irreducible tensor is not ``universal'' but strongly depends on the structure of $\A$ and on the eigenvector problem one is interested in (see \cite{gautier2018unifying} for a thorough discussion on the topic). In this work we focus on the following notion of irreducible cubical tensors, originally proposed in~\cite{chang2008perron}
\begin{definition}
	A $n$-dimensional order-$m$ nonnegative cubical tensor $\A$ is called \emph{reducible} if there exists a set of indices $I\subseteq V =\{1,\dots,n\}$, $I\neq \emptyset$, $I\neq V$, such that
	$$
	a_{i_1,i_2, \dots, i_m} = 0, \qquad \forall i_1 \in I, \qquad \forall i_2, \dots, i_m \notin I\, .
	$$
	The tensor $\A$ is irreducible if it  is not reducible.
\end{definition}

\subsection{Background}
Let $\Omega\subseteq \RR^n$ be the set of all stochastic vectors
\begin{equation*}
\Omega:=\Big\{ \s \in \mathbb{R}^n\;:\; \sum_{i=1}^n s_i=1, \, {s}_i \geq 0,  i=1,\dots,n\Big\},
\end{equation*}
and let $\Omega_+\subseteq \Omega$  be the set of entry-wise positive stochastic vectors.

The multilinear {P}age{R}ank model proposed in \cite{gleich2015} seeks a fixed-point solution  in $\Omega$ of the following nonlinear map:
\begin{equation} \label{prob:mpr}
f(\s)=\alpha\P\s ^{m-1}+(1-\alpha)\v,
\end{equation}
being $\P = (p_{i_1,\dots,i_m})$ a stochastic tensor ($\sum_{i_1}p_{i_1,\dots,i_m}=1$) and $\v \in \Omega_+$  the so-called ``teleportation vector''.
It is worth point out that a solution of  \eqref{prob:mpr} is a stationary distribution of a stochastic process called ``The Spacey Random Surfer'' \cite{benson2017spacey} which is an interesting vertex-reinforced Markov process that uses a combination of {an} {aggregated history} and the current state.

The map \eqref{prob:mpr} is reminiscent of the standard single-order {P}age{R}ank model. In that case, given the transition matrix  $P=(p_{ij})$, $\sum_i p_{ij} = 1$, of a random walk on a graph with $n$ nodes $V=\{1,\dots,n\}$ and given $\v=(v_1,\dots,v_n) \in \Omega_+$, one seeks to compute a positive eigenvector of the {P}age{R}ank transition matrix
\begin{equation} \label{PRmatrix}
P_\PR = \alpha P + (1-\alpha)\v \e^T,
\end{equation}
where $\e$ is the vector of all ones. Such PageRank transition matrix $P_\PR$ models a new random walk where one takes a step according to the initial Markov chain with probability $\alpha$, and with probability $1-\alpha$ randomly jumps to node $i$ according to the fixed teleportation probability $v_i>0$.
	Note that, as $\v \in \Omega_+$, for any $0\leq\alpha<1$ the {P}age{R}ank matrix $P_\PR$ is irreducible and thus, by the Perron-Frobenius theorem, there exists a unique positive eigenvector $\s \in \Omega_+$ such that  $P_\PR\s = \s$ \cite{berkhin2005survey,tudisco2015complex,varga2009matrix}.
The analogy with \eqref{prob:mpr} essentially follows by \eqref{eq:Z-eig}. In fact, it is not difficult to observe that the proposed multilinear {P}age{R}ank, solution of \eqref{prob:mpr}, coincides with a nonnegative $Z$-eigenvector of the {P}age{R}ank transition tensor $\mathcal P_\PR$, i.e.\ it solves \eqref{eq:Z-eig}. 

More precisely, given a stochastic tensor $\P$ describing the transition probabilities of a higher-order random walk on a graph with $n$ nodes, consider the following {P}age{R}ank transition tensor
\begin{equation}
	\P_\PR:=\alpha \P+(1-\alpha)\V,
\end{equation}
where, given the teleportation vector $\v \in \Omega_+$ and the all-ones vector $\e$,  $\V$ is the rank-one positive tensor $\V = \v \otimes \e \otimes \dots \otimes  \e$, with entries $\V_{i_1, \dots,i_m}=v_{i_1}$.
Now, for any $\s \in \Omega$ we have  $\V\s^{m-1} = (\v \otimes \e \otimes \dots \otimes  \e)\s^{m-1} = (\e^T \s)^{m-1}\v = \v$ and thus it holds
\begin{equation}\label{eq:pr-action}
\P_\PR{\s}^{m-1}=\alpha \P{\s}^{m-1}+(1-\alpha)\V{\s}^{m-1}=\alpha \P{\s}^{m-1}+(1-\alpha)\v ,
\end{equation}
showing that $ \s\in\Omega$ is a fixed-point of \eqref{prob:mpr} if and only if $\P_\PR  \s^{m-1}= \s$.

This tensor eigenvector formulation unveils an elegant conceptual analogy with the matrix case which, however, comes with several fundamental differences.
Tensor eigenvectors can be defined in a number of different ways, including the $Z$-eigenvector formulation we are concerned with in this work. The Perron-Frobenius theory for nonnegative tensors and the numerous corresponding spectral problems is still an active field of research (see for instance \cite{gautier2018unifying} and the references therein).  Several drawbacks arise due  the nonlinearity introduced by the additional dimensions and the Perron-Frobenius theorems developed so far show many crucial differences with respect to the matrix case.
For example, unlike the stochastic matrix case, the irreducibility of the stochastic tensor $\P_\PR$ is not enough to ensure the uniqueness of nonnegative fixed points of \eqref{prob:mpr}, whereas existence is guaranteed in both cases by the Brower's fixed-point theorem \cite{ciarlet2013linear}. {This is shown by the following example, borrowed from \cite{saburov} (see \cite{chang2013uniqueness} for another example):} 

\vspace{0.2cm}
\noindent {
 \textbf{Example\cite{saburov}} Consider the stochastic tensor
\begin{equation*}
\begin{split}
&	\mathcal{P}(:,:,1)=\begin{bmatrix}
	 \frac{232873}{319300}& \frac{7}{10}  &  \frac{3}{10}\\ \\
	 \frac{27}{100}& \frac{470171}{2 \times 814300} &  \frac{378421}{2 \times 407150}\\ \\
	  \frac{54}{79825}& \frac{18409}{2 \times 814300} & \frac{191589}{2 \times 407150 } 
	\end{bmatrix},  \quad 
	\mathcal{P}(:,:,2)=\begin{bmatrix}
	\frac{7}{10} & \frac{4717}{10300}&  \frac{1}{100}\\ \\
	\frac{470171}{2 \times 814300} & \frac{1}{2} & \frac{158157}{2 \times 814300}\\ \\
	\frac{18409}{2 \times 814300} & \frac{433}{10300}&  \frac{1454157}{2 \times 814300}
\end{bmatrix},  \\
&	\mathcal{P}(:,:,3)=\begin{bmatrix}
	 \frac{3}{10}& \frac{1}{100}& \frac{207}{63860} \\ \\
	\frac{378421}{2 \times 407150}&\frac{158157}{2 \times 814300} & \frac{3}{20}\\ \\
	\frac{191589}{2 \times 407150}& \frac{1454157}{2 \times 814300} & \frac{27037}{31930}
\end{bmatrix}.
\end{split}
\end{equation*}
Note that such tensor is entry-wise positive, thus it is irreducible. However, it is such that $\mathcal{P}\mathbf{x}_i^{2}=\mathbf{x}_i$ for $i=1,2,3$, where $\mathbf{x}_1=[0.1,0.2,0.7]^T$, $\mathbf{x}_2=[0.4,0.3,0.3]^T$ and $\mathbf{x}_3=[0.59,0.31,0.1]^T$. }

\vspace{0.2cm}
{
More precisely, for the  case of $Z$-eigenvector and, specifically, for the multilinear {P}age{R}ank tensor $\mathcal P_\PR$, we recall  the following two existence and uniqueness results.
\begin{theorem}[\cite{li2014limiting}]\label{theo:existence}
	If $\A$ is a real  $n$ dimensional stochastic tensor of order $m$, then there exists a nonnegative vector $\s \in \Omega $ such that $\A {\s}^{m-1}= {\s}$. In particular, if $\A$ is irreducible, then ${\s}$ is positive.
\end{theorem}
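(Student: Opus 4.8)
The plan is to split the statement into its two assertions: first the existence of a nonnegative fixed point in $\Omega$, and then its strict positivity under the irreducibility hypothesis. The existence part is exactly the Brouwer argument already alluded to in the text, so I would record it briefly and concentrate the real work on positivity.

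For existence, I would show that the map $f(\s) := \A\s^{m-1}$ sends the simplex $\Omega$ into itself and then invoke Brouwer's fixed-point theorem. Nonnegativity of $f(\s)$ is immediate, since $\A$ has nonnegative entries and $\s \geq 0$. The normalization is preserved as well: summing over the first mode and using $\sum_{i_1} a_{i_1,\dots,i_m} = 1$ gives $\sum_{i_1}(\A\s^{m-1})_{i_1} = \sum_{i_2,\dots,i_m} s_{i_2}\cdots s_{i_m} = \big(\sum_i s_i\big)^{m-1} = 1$. Hence $f\colon \Omega \to \Omega$ is a continuous self-map of a nonempty compact convex set, and Brouwer yields some $\s \in \Omega$ with $\A\s^{m-1} = \s$.

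For positivity, I would argue by contradiction. Suppose $\A$ is irreducible but the fixed point $\s$ has a nonempty zero set $I := \{\, i : s_i = 0 \,\}$. Since $\s \in \Omega$ sums to one, at least one entry is positive, so $I \neq V$. For each $i_1 \in I$ the fixed-point equation reads $0 = s_{i_1} = \sum_{i_2,\dots,i_m} a_{i_1,i_2,\dots,i_m}\, s_{i_2}\cdots s_{i_m}$, a sum of nonnegative terms, so every summand vanishes. Restricting to indices $i_2,\dots,i_m \notin I$, where $s_{i_2}\cdots s_{i_m} > 0$, this forces $a_{i_1,i_2,\dots,i_m} = 0$. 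As this holds for all $i_1 \in I$ and all $i_2,\dots,i_m \notin I$, the set $I$ witnesses reducibility of $\A$, contradicting the hypothesis. Therefore $I = \emptyset$ and $\s$ is entry-wise positive, i.e.\ $\s \in \Omega_+$.

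The existence half is routine once the self-map property is verified, and no analysis beyond Brouwer is involved. The only delicate point is the positivity half: one must make sure that the zero pattern extracted from the fixed-point equation, namely $a_{i_1,\dots,i_m}=0$ for $i_1\in I$ and $i_2,\dots,i_m\notin I$, lines up exactly with the combinatorial definition of reducibility adopted in the paper. I expect that index bookkeeping to be the main (and small) obstacle; everything else is a direct consequence of nonnegativity and the definitions.
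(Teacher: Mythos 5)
Your proof is correct and takes exactly the route the paper points to: the paper states this theorem as a citation from \cite{li2014limiting} without reproducing a proof, but it explicitly attributes existence to Brouwer's fixed-point theorem, which is precisely your simplex self-map argument (your computation $\sum_{i_1}(\mathcal{A}\mathbf{s}^{m-1})_{i_1}=\bigl(\sum_i s_i\bigr)^{m-1}=1$ is the right verification of stochasticity preservation). Your positivity step is also sound, since the zero pattern $a_{i_1,\dots,i_m}=0$ for all $i_1\in I$ and all $i_2,\dots,i_m\notin I$ that you extract from the fixed-point equation on the zero set $I=\{i : s_i=0\}$ is verbatim the reducibility condition in the paper's Definition~1, so irreducibility indeed forces $I=\emptyset$.
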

Observe that if $\v \in \Omega_+$, then $\P_\PR$ is stochastic and irreducible. In fact,
$$
\sum_{i_1=1}^n (\P_\PR)_{i_1,\dots,i_m} = \alpha \sum_{i_1=1}^n p_{i_1,\dots,i_m} + (1-\alpha)\sum_{i_1=1}^n v_{i_1} = 1
$$
and, for any $I\subset V=\{1, \dots,n\}$, we have
$$
(\P_\PR)_{i_1,\dots,i_m} \geq (1-\alpha)v_{i_1}>0
$$
for all $i_1\in I$ and $i_2, \dots, i_m\notin I$.
Hence, Theorem \ref{theo:existence} ensures existence of at least one solution of \eqref{prob:mpr}, namely  there exists $\s\in \Omega$ such that $\P_\PR\s^{m-1}=\s$ and this solution is positive if $\v \in \Omega_+$.
Uniqueness, however, is guaranteed only under somewhat restrictive conditions on $\alpha$. For example, the following result holds (see \cite{fasino2019higher} for more information on uniqueness of fixed-point for generic stochastic tensors and  \cite{li2017uniqueness} for the specific case of the multilinear {P}age{R}ank):
\begin{theorem}[\cite{gleich2015}]\label{theo:existence_uniquness}\footnote{Theorem 1 of the previous \texttt{arXiv} version erroneously states the uniqueness for every $\alpha \in [0,1)$. } Let $\P$ be a $n$ dimensional stochastic tensor of order $m$ and let $\v$ be a nonnegative teleportation vector. Then the multilinear {P}age{R}ank equation
	\begin{equation*}
	\P_\PR \s^{m-1} =\alpha \P{\s}^{m-1}+(1-\alpha)\v =\s
	\end{equation*}
	has a unique solution if $\alpha < (m-1)^{-1}$.
\end{theorem}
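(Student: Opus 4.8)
Since Theorem~\ref{theo:existence} already guarantees the existence of at least one fixed point $\s\in\Omega$, the plan is to prove \emph{uniqueness} by a contraction argument in the $1$-norm. Suppose $\x,\y\in\Omega$ both solve $\P_\PR\s^{m-1}=\s$. Subtracting the two identities and using \eqref{eq:pr-action}, the teleportation terms cancel and I would obtain
\begin{equation*}
\x-\y=\alpha\bigl(\P\x^{m-1}-\P\y^{m-1}\bigr).
\end{equation*}
The whole argument then reduces to estimating $\|\P\x^{m-1}-\P\y^{m-1}\|_1$ in terms of $\|\x-\y\|_1$.

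The first ingredient is a ``stochastic nonexpansiveness'' estimate for the multilinear action of $\P$. Writing, for arbitrary vectors $\u_2,\dots,\u_m\in\RR^n$,
\begin{equation*}
(\P\,\u_2\cdots\u_m)_{i_1}=\sum_{i_2,\dots,i_m}p_{i_1,\dots,i_m}(u_2)_{i_2}\cdots(u_m)_{i_m},
\end{equation*}
I would bound $\|\P\,\u_2\cdots\u_m\|_1$ by moving the absolute value inside the sum, summing over $i_1$ first, and invoking the stochasticity condition $\sum_{i_1}p_{i_1,\dots,i_m}=1$. This yields the key inequality $\|\P\,\u_2\cdots\u_m\|_1\le\prod_{j=2}^m\|\u_j\|_1$, valid for \emph{any} vectors $\u_j$ (not only stochastic ones).

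The second ingredient is a telescoping identity. Exploiting that $\P\,\u_2\cdots\u_m$ is multilinear in its $m-1$ arguments, I would write
\begin{equation*}
\P\x^{m-1}-\P\y^{m-1}=\sum_{k=1}^{m-1}\P\,\underbrace{\x\cdots\x}_{k-1}\,(\x-\y)\,\underbrace{\y\cdots\y}_{m-1-k},
\end{equation*}
where the $k$-th summand differs from its predecessor only in the $k$-th contracted slot, so that linearity in that slot produces the factor $\x-\y$. Applying the nonexpansiveness estimate to each of the $m-1$ terms, and using $\|\x\|_1=\|\y\|_1=1$ in the ``spectator'' slots while keeping the single factor $\|\x-\y\|_1$, each summand is bounded by $\|\x-\y\|_1$, so that $\|\P\x^{m-1}-\P\y^{m-1}\|_1\le(m-1)\|\x-\y\|_1$.

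Combining the two ingredients gives $\|\x-\y\|_1\le\alpha(m-1)\|\x-\y\|_1$. Under the hypothesis $\alpha<(m-1)^{-1}$ the factor $\alpha(m-1)<1$, which forces $\|\x-\y\|_1=0$, i.e.\ $\x=\y$, establishing uniqueness. The step I expect to require the most care is the multilinear bookkeeping in the telescoping identity: one must check that consecutive terms really differ in a single slot and that the spectator slots contribute factors of $1$ rather than $\|\x-\y\|_1$ --- it is precisely this count of $m-1$ terms that makes the threshold $(m-1)^{-1}$ appear and shows it to be the natural one for this elementary argument.
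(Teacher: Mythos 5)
Your proof is correct: the telescoping decomposition of $\P\x^{m-1}-\P\y^{m-1}$ over the $m-1$ slots, combined with the $1$-norm nonexpansiveness $\|\P\u_2\cdots\u_m\|_1\le\prod_j\|\u_j\|_1$ coming from column-stochasticity, is exactly the standard contraction argument, and it is the same route taken in \cite{gleich2015}, to which the paper defers for this result (Theorem~\ref{theo:existence_uniquness} is stated there without proof). Note also that the paper's own proof of its Proposition on the dependence of $\s_\alpha$ on $\alpha$ uses precisely the same two ingredients --- adding and subtracting mixed terms so that multilinearity produces the factor $\alpha(m-1)$, together with $\|\P\|_1\le 1$ --- so your argument matches the paper's technique as well.
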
}

As in the single-order case, we are particularly interested in the case $\alpha\approx 1$. This is because one of the main reasons for introducing the rank-one  perturbation $(1-\alpha)\mathcal V$ is exactly to ensure irreducibility of $\P_\PR$ (and thus existence  of a positive solution, in this case). However, since we are ideally interested in the stationary solution of the original higher-order chain,  the most interesting and relevant problem settings require a very small random surfing perturbation $(1-\alpha)\mathcal V$. In fact, the PageRank solution largely depends on the parameter $\alpha$, { as the following bound shows (see also \cite{fasino2019higher})} 

\begin{proposition}
Consider $\P_\PR(\alpha):= \alpha \P+(1-\alpha)\V$, $\P_\PR(\beta):= \beta \P+(1-\beta)\V$ with $\alpha, \beta \in (0,1)$, and let $\s_\alpha$ and $\s_\beta$ be  corresponding  PageRank solutions. Then, if $0<\alpha<1/(m-1)$, we have
$$
\|\s_\alpha - \s_\beta\|_1 \leq \frac{2\, |\beta-\alpha|}{1-\alpha (m-1)}.
$$
\end{proposition}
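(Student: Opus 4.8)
The plan is to compare the two fixed-point equations directly and reduce everything to a single scalar inequality for $\|\s_\alpha - \s_\beta\|_1$. Writing $g_\gamma(\s) := \gamma\P\s^{m-1} + (1-\gamma)\v$, so that $\s_\alpha = g_\alpha(\s_\alpha)$ and $\s_\beta = g_\beta(\s_\beta)$, I would first split the difference by adding and subtracting $g_\alpha(\s_\beta)$:
\[
\s_\alpha - \s_\beta = \bigl(g_\alpha(\s_\alpha) - g_\alpha(\s_\beta)\bigr) + \bigl(g_\alpha(\s_\beta) - g_\beta(\s_\beta)\bigr).
\]
A direct computation shows the first bracket equals $\alpha\P(\s_\alpha^{m-1} - \s_\beta^{m-1})$, while the second collapses to $(\alpha-\beta)(\P\s_\beta^{m-1} - \v)$, since the teleportation contributions amount to $\bigl((1-\alpha)-(1-\beta)\bigr)\v = -(\alpha-\beta)\v$.

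The heart of the argument is a Lipschitz estimate for the tensor map on the simplex: for any $\x,\y\in\Omega$ one has $\|\P\x^{m-1}-\P\y^{m-1}\|_1 \leq (m-1)\|\x-\y\|_1$. I would prove this via the telescoping identity
\[
x_{i_2}\cdots x_{i_m} - y_{i_2}\cdots y_{i_m} = \sum_{k=2}^{m} x_{i_2}\cdots x_{i_{k-1}}(x_{i_k}-y_{i_k})\,y_{i_{k+1}}\cdots y_{i_m},
\]
inserting it into the definition of $\P\x^{m-1}-\P\y^{m-1}$, applying the triangle inequality entrywise, and then summing over the first index $i_1$. The stochasticity $\sum_{i_1}p_{i_1,\dots,i_m}=1$ removes the tensor entirely, and each resulting sum factorizes; because every partial sum of the entries of $\x$ or $\y$ equals $1$ (both lie in $\Omega$), each of the $m-1$ telescoped terms contributes exactly $\|\x-\y\|_1$, producing the factor $(m-1)$.

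With this in hand the remaining estimates are routine. The first bracket is bounded by $\alpha(m-1)\|\s_\alpha-\s_\beta\|_1$; in the second bracket both $\P\s_\beta^{m-1}$ and $\v$ are stochastic vectors, so their difference has $1$-norm at most $2$, giving $\|(\alpha-\beta)(\P\s_\beta^{m-1}-\v)\|_1 \leq 2|\alpha-\beta|$. Combining the two yields
\[
\|\s_\alpha-\s_\beta\|_1 \leq \alpha(m-1)\,\|\s_\alpha-\s_\beta\|_1 + 2|\alpha-\beta|,
\]
and the hypothesis $\alpha<1/(m-1)$ guarantees $1-\alpha(m-1)>0$, so I may transfer the first term to the left-hand side and divide to obtain the stated bound. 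The only genuinely technical point is the telescoping Lipschitz estimate; once it is established the rest is an elementary rearrangement, and the condition $\alpha<1/(m-1)$ is seen to be exactly what is needed both for $\s_\alpha$ to be well-defined (Theorem \ref{theo:existence_uniquness}) and for the denominator to stay positive.
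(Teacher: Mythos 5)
Your proof is correct and follows essentially the same route as the paper's: both add and subtract the cross term $\alpha\P\s_\beta^{m-1}$ (your $g_\alpha(\s_\beta)$), bound the contraction term by $\alpha(m-1)\|\s_\alpha-\s_\beta\|_1$, bound the remaining term by $2|\alpha-\beta|$ using stochasticity, and rearrange using $1-\alpha(m-1)>0$. The only difference is that you prove the Lipschitz estimate $\|\P\x^{m-1}-\P\y^{m-1}\|_1\leq(m-1)\|\x-\y\|_1$ explicitly via the telescoping identity, a step the paper invokes implicitly without spelling it out.
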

\begin{proof}
By adding and subtracting $\alpha \P \s_{\beta}^{m-1}$ from $\|\s_{\alpha}-\s_{\beta}\|_1 = \|\P_\PR(\alpha)\s_{\alpha}^{m-1}-\P_\PR(\beta)\s_{\beta}^{m-1} \|_1$ and using the inequality $\| \P \|_1 \leq 1$ we obtain
\begin{align*}
\begin{aligned}
\|\s_{\alpha}-\s_{\beta}\|_1 &= 
 \| \alpha \P\s_{\alpha}^{m-1}-\beta \P\s_{\beta}^{m-1} +(\beta-\alpha)\v + \alpha \P \s_{\beta}^{m-1} - \alpha \P \s_{\beta}^{m-1} \| \\
 &\leq  \alpha (m-1) \| \P \|_1 \|\s_{\alpha}-\s_{\beta}\|_1+|\alpha-\beta| \, \| \P \|_1 \|\s_{\beta}\|_1 +|\alpha-\beta|\, \|\v\|_1 \\
& \leq \alpha (m-1)   \|\s_{\alpha}-\s_{\beta}\|_1+|\alpha-\beta| \,  \|\s_{\beta}\|_1 +|\alpha-\beta|\, \|\v\|_1\,  .
\end{aligned}
\end{align*}
Thus, as   $\|\s_{\beta}\|_1=\|\v\|_1=1$, we obtain
$$
\big(1-\alpha(m-1)\big)\|\s_{\alpha}-\s_{\beta}\|_1 \leq  2 |\alpha-\beta|
$$ 
and the thesis follows by rearranging terms. 
\end{proof}

On the other hand, as observed in \cite{gleich2015} and as highlighted by the numerical experiments we will present in Section \ref{sec:numerical_results}, solving the multilinear {P}age{R}ank problem as the fixed point of $f$ defined in \eqref{prob:mpr} becomes more difficult when $\alpha$ gets closer to one. This is largely due to the fact that the contractivity of the map $f$ decreases when $\alpha$ increases. This phenomenon is shown for example  in \cite{gautier2018contractivity}, where the contractivity of the map $\s \mapsto \A\s^{m-1}$ is analyzed in terms of the entries of $\A$, and thus of $\alpha$ in our setting $\A =\P_\PR$. Similar contractivity conditions on $\alpha$ are also proved in \cite{gleich2015,li2017uniqueness}, for the specific PageRank case.

In the following section we review two  fixed-point iterative techniques for computing a solution of \eqref{prob:mpr}. Then, in Section \ref{sec:extr_framework}, we introduce our new acceleration framework based on the Simplified Topological $\varepsilon$-algorithm.
\subsection{Power methods for the multilinear {P}age{R}ank} {Due to their simplicity of implementation and their cheap storage requirements, power method-type iterations are typically the preferred choice for large scale problems. However, in the higher-order setting,  their convergence behavior  may be particularly unfavorable. In this section we review the shifted higher-order power method (an ad-hoc version of the SS-HOPM method originally introduced in \cite{kolda2011shifted}) and the  inner-outer method \cite{gleich2015} for the computation of the  multilinear {P}age{R}ank vector and their convergence properties. We then introduce a restarted extrapolation framework, based on the  simplified topological $\varepsilon$-algorithm \cite{brezinski2014simplified,brezinski2017simplified}, that allows to strongly improve the convergence rates of these methods, at a marginal additional cost per iteration. }

\subsubsection{The (higher-order) power method} \label{subsec:power_method}
The power method is one of the best used techniques for the computation of extreme eigenvectors of matrices, in particular this is the standard choice for the computation of the (single-order) {P}age{R}ank vector. This iterative method is essentially a fixed-point iteration scheme and it has been extended to the tensor setting following this interpretation. Precisely, given an initial guess $\s_0$ and the stochastic tensor $\A$, the higher-order power method defines the sequence
\begin{equation}\label{eq:hopm}
\s_{\ell+1} = \A\s_{\ell}^{m-1} + \gamma \s_\ell, \qquad \ell=0,1,\dots
\end{equation}
where the real shifting parameter $\gamma$ can be used to tune the behavior of the method when convergence is not reached for $\gamma=0$ \cite{kolda2011shifted}.

However, the convergence behavior of the method changes significantly when moving from the matrix to the tensor setting. {In the matrix case, convergence to the stationary distribution is ensured for any irreducible aperiodic chain \cite{durrett2012essentials}. More precisely, if $A$ is an irreducible and aperiodic stochastic matrix \cite{varga2009matrix}, then  for any positive vector $\s_0\in\Omega_+$ we are guaranteed that the sequence $\s_{\ell+1}=A\s_{\ell}$, $\ell=0,1,\dots$ converges  to the unique $\bar \s\in \Omega_+$ such that $A\bar \s = \bar \s$. 
%
%
%
Note that we have, equivalently, $\s_{\ell+1} = A^{\ell}\s_0$ and actually, for irreducible and aperiodic chains, the whole matrix sequence $A^\ell$ converges to the rank one matrix $\bar \s\e^T$ (see \cite{tudisco2015complex}, e.g.).  The situation is different in the tensor settings. 
{Due to the nonlinearity introduced by the additional modes, assuming that the stochastic tensor $\mathcal{A}$ is irreducible or aperiodic \cite{bozorgmanesh2016convergence}, is not enough to ensure the convergence of the higher-order power method $\s_{\ell+1} = \mathcal A\s_{\ell}^{m-1}$ for an arbitrary starting point $\s_0 \in \Omega_+$}. 

Note that  iteration   \eqref{eq:hopm} does not preserve the stochasticity of the iterates. To circumvent this problem,  for the specific case of the multilinear {P}age{R}ank $\A = \P_\PR$, the fixed-point equation \eqref{prob:mpr} can be equivalently reformulated as
\begin{equation*}
(1+\gamma)\s=[\alpha\P\s ^{m-1}+(1-\alpha)\v]+\gamma \s \, .
\end{equation*}
Based on this observation,  the following alternative method, called the \textit{shifted fixed-point method} (in short SFPM), is proposed  in \cite{gleich2015} in place of \eqref{eq:hopm},
\begin{equation} \label{methd:SSHOPM}
\s_{\ell+1}=\frac{\alpha}{1+\gamma}\P\s_{\ell}^{m-1}+\frac{1-\alpha}{1+\gamma}\v +\frac{\gamma}{1+\gamma}\s_\ell.
\end{equation}
If $\s_0 \in \Omega$, the iterations produced by \eqref{methd:SSHOPM} are guaranteed to be stochastic and the following result holds:
\begin{theorem}[\cite{gleich2015}] \label{theo:SSHOP}
	Let $\P$ be an order-$m$ cubical stochastic tensor, let $\v\in \Omega_+$ and $\s_0\in\Omega$ be stochastic vectors, and let $\alpha < 1/(m-1)$. The shifted fixed-point method \eqref{methd:SSHOPM} produces stochastic iterations, converges to the unique positive solution $\s$ of the multilinear {P}age{R}ank problem and
	\begin{equation*}
	\|\s_{\ell}-\s\|_1 \leq 2 \Big(\frac{\alpha(m-1)+\gamma}{1+\gamma}\Big)^\ell.
	\end{equation*}
\end{theorem}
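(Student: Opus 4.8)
The plan is to establish in turn the three assertions --- stochasticity of the iterates, convergence, and the quantitative rate --- with a single contraction estimate doing most of the work. I would first verify that the map in \eqref{methd:SSHOPM} sends $\Omega$ into itself. Nonnegativity of $\s_{\ell+1}$ is immediate, since the three coefficients $\tfrac{\alpha}{1+\gamma}$, $\tfrac{1-\alpha}{1+\gamma}$, $\tfrac{\gamma}{1+\gamma}$ are nonnegative and $\P\s_\ell^{m-1}$, $\v$, $\s_\ell$ are all nonnegative. For the mass-one constraint, the key observation is that for any $\s\in\Omega$ the stochasticity of $\P$ gives $\e^T\P\s^{m-1}=\sum_{i_2,\dots,i_m}s_{i_2}\cdots s_{i_m}=(\e^T\s)^{m-1}=1$; combined with $\e^T\v=\e^T\s_\ell=1$ this yields $\e^T\s_{\ell+1}=\tfrac{\alpha+(1-\alpha)+\gamma}{1+\gamma}=1$, so by induction $\s_\ell\in\Omega$ for every $\ell$.

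Next I would set up the error recursion. Existence and uniqueness of a positive fixed point $\s$ are guaranteed by Theorems \ref{theo:existence} and \ref{theo:existence_uniquness} under the hypothesis $\alpha<1/(m-1)$, and one checks directly that this $\s$ is also a fixed point of \eqref{methd:SSHOPM}. Subtracting $\s$ from both sides of \eqref{methd:SSHOPM}, the teleportation terms cancel and
\begin{equation*}
\s_{\ell+1}-\s=\frac{\alpha}{1+\gamma}\big(\P\s_\ell^{m-1}-\P\s^{m-1}\big)+\frac{\gamma}{1+\gamma}\big(\s_\ell-\s\big),
\end{equation*}
so that, by the triangle inequality,
\begin{equation*}
\|\s_{\ell+1}-\s\|_1\leq\frac{\alpha}{1+\gamma}\,\|\P\s_\ell^{m-1}-\P\s^{m-1}\|_1+\frac{\gamma}{1+\gamma}\,\|\s_\ell-\s\|_1.
\end{equation*}

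The heart of the argument --- and the step I expect to be the main obstacle --- is the Lipschitz bound $\|\P\x^{m-1}-\P\y^{m-1}\|_1\leq(m-1)\|\x-\y\|_1$ valid for $\x,\y\in\Omega$. I would prove it by a telescoping decomposition, writing the difference as a sum of $m-1$ multilinear terms in each of which a single copy of $\y$ is replaced by $\x$, so that the factor $\x-\y$ is inserted in one mode while the remaining $m-2$ slots carry the stochastic vectors $\x$ or $\y$. For a fixed choice of these stochastic arguments, contracting $\P$ along their modes yields a \emph{column-stochastic} matrix acting on $\x-\y$ --- this is exactly where $\sum_{i_1}p_{i_1,\dots,i_m}=1$ together with $\e^T\x=\e^T\y=1$ enter --- and a column-stochastic matrix is a $1$-norm contraction, so each of the $m-1$ terms is bounded by $\|\x-\y\|_1$. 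Taking $\x=\s_\ell$ and $\y=\s$ then gives $\|\P\s_\ell^{m-1}-\P\s^{m-1}\|_1\leq(m-1)\|\s_\ell-\s\|_1$.

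Finally I would combine the last two displays to obtain
\begin{equation*}
\|\s_{\ell+1}-\s\|_1\leq q\,\|\s_\ell-\s\|_1,\qquad q:=\frac{\alpha(m-1)+\gamma}{1+\gamma},
\end{equation*}
and observe that $q<1$ precisely when $\alpha(m-1)<1$, i.e.\ under the standing assumption $\alpha<1/(m-1)$. Iterating the contraction gives $\|\s_\ell-\s\|_1\leq q^\ell\|\s_0-\s\|_1$, which proves convergence to $\s$; the stated estimate then follows from $\|\s_0-\s\|_1\leq\|\s_0\|_1+\|\s\|_1=2$, both vectors being stochastic.
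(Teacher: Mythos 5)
Your proof is correct and follows essentially the same route as the source of this result: the paper states Theorem \ref{theo:SSHOP} only by citation of \cite{gleich2015}, and the argument there rests on exactly your ingredients --- invariance of $\Omega$ under the iteration \eqref{methd:SSHOPM}, the Lipschitz bound $\|\P\x^{m-1}-\P\y^{m-1}\|_1\leq(m-1)\|\x-\y\|_1$ on the simplex obtained by telescoping the rank-one difference and using column-stochasticity of the contracted matrices, and the resulting contraction with factor $\big(\alpha(m-1)+\gamma\big)/(1+\gamma)$, finished off by $\|\s_0-\s\|_1\leq 2$. The only point worth flagging is the implicit assumption $\gamma\geq 0$, needed so that the three coefficients in \eqref{methd:SSHOPM} form a convex combination; this is also implicit in the cited statement.
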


When $\alpha > 1/(m-1)$  it is recommended to use the shifted iteration with $\gamma \geq (m-1)/2$ \cite{gleich2015}.
\subsubsection{The inner-outer method} \label{subsec:inner_outer}
The inner-outer method for the standard {P}age{R}ank problem was proposed in \cite{gleich2010inner}. An extension to the multilinear {P}age{R}ank problem is then proposed in \cite{gleich2015}. This is an implicit nonlinear iteration scheme that uses the multilinear {P}age{R}ank in the convergent regime as a subroutine. In order to derive this method one first rearranges the fixed-point iteration $\A\s^{m-1}=\s$ into
%
\begin{equation} \label{eq:inner_outer_rearrangement}
\s =	\frac{\alpha}{m-1}\A\s^{m-1}+\Big(1-\frac{\alpha}{m-1}\Big)\s .
\end{equation}
Then, using \eqref{eq:inner_outer_rearrangement}, the following nonlinear implicit iteration scheme arises:
\begin{equation}\label{eq:in_out_it}
\s_{\ell+1}=\frac{\alpha}{m-1}\A\s_{\ell+1}^{m-1}+\Big(1-\frac{\alpha}{m-1}\Big)\s_\ell
\end{equation}

The iterative scheme in \eqref{eq:in_out_it} requires, at each step, the solution of a multilinear {P}age{R}ank problem  which involves $\A=\P_\PR$, $\alpha/(m-1)$ and $\s_\ell$. More precisely, $\s_{\ell+1}$ in \eqref{eq:in_out_it} is the solution of the following fixed-point problem:
\begin{equation*}
\s= \widetilde {\P}_\PR\s^{m-1} = \widetilde{\alpha}\P_\PR\s^{m-1}+ (1-\widetilde{\alpha})\s_\ell,
\end{equation*}
where $\widetilde\P_\PR = \widetilde \alpha \P_\PR + (1-\widetilde \alpha)\mathcal S_\ell$, $\widetilde{\alpha}=\alpha/(m-1)$ and $\mathcal S_\ell$ is the rank-one tensor $(\mathcal S_\ell)_{i_1,\dots, i_m}= (\s_\ell)_{i_1}$. Note that, since $\alpha<1$, we have $\widetilde{\alpha}<(m-1)^{-1}$. This reformulation unveils the analogy with \eqref{eq:pr-action} and thus,  using Theorem \ref{theo:existence_uniquness},  problem \eqref{eq:in_out_it} has a unique solution  for any $\ell=0,1,\dots$ and its computation can be addressed using the (higher-order) power method with guarantee of convergence. The following result holds:
\begin{theorem}[\cite{gleich2015}] \label{theo:inner_outer}
	Let $\P$ be an order-$m$ stochastic tensor, let $\v \in \Omega_+$, $\s_0 \in \Omega$,  let $\alpha<1/m-1$, and $\mathcal{P}_\PR=\alpha \mathcal{P}+(1-\alpha)\mathcal{V}$. The inner-outer multilinear PageRank method as defined in \eqref{eq:in_out_it}, produces stochastic iterations, converges to the unique positive solution $\s$ of the multilinear {P}age{R}ank problem and
	\begin{equation*}
	\|\s_\ell-\s \|_1 \leq 2\Big(\frac{1-\alpha/(m-1)}{1-\alpha^2}\Big)^\ell.
	\end{equation*}
\end{theorem}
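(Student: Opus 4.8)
The plan is to establish the three assertions—stochasticity, convergence, and the explicit rate—in that order, with essentially all the work concentrated in the rate. First I would dispose of well-posedness and stochasticity by induction on $\ell$, as already anticipated in the paragraph preceding the statement: each inner solve in \eqref{eq:in_out_it} is itself a multilinear PageRank fixed-point problem for the tensor $\widetilde\P_\PR = \widetilde\alpha\,\P_\PR + (1-\widetilde\alpha)\mathcal S_\ell$ with reduced damping $\widetilde\alpha = \alpha/(m-1) < 1/(m-1)$. Theorem \ref{theo:existence} and Theorem \ref{theo:existence_uniquness} then guarantee a unique solution $\s_{\ell+1}\in\Omega$, which is moreover positive since $\widetilde\P_\PR$ inherits irreducibility from the strictly positive teleportation already present in $\P_\PR$. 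Starting from $\s_0\in\Omega$, this shows that every iterate is a well-defined stochastic (indeed positive) vector.

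Next I would set up the error recursion. The multilinear PageRank solution $\s$ satisfies $\P_\PR\s^{m-1}=\s$ and hence also solves the inner-outer relation \eqref{eq:in_out_it} with $\s_{\ell+1}=\s_\ell=\s$; subtracting this from the iteration is the natural move. The decisive step—the one that produces the \emph{sharp} denominator—is to expand both identities using the explicit form $\P_\PR\x^{m-1}=\alpha\P\x^{m-1}+(1-\alpha)\v$ valid for stochastic $\x$ (cf.\ \eqref{eq:pr-action}), so that the constant teleportation term $(1-\alpha)\v$ cancels exactly and one is left with
\begin{equation*}
\s_{\ell+1}-\s = \widetilde\alpha\,\alpha\big(\P\s_{\ell+1}^{m-1}-\P\s^{m-1}\big) + (1-\widetilde\alpha)(\s_\ell - \s).
\end{equation*}

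The remaining ingredient is the multilinear Lipschitz estimate $\|\P\x^{m-1}-\P\y^{m-1}\|_1 \le (m-1)\|\x-\y\|_1$ for stochastic $\x,\y$. I would prove it by telescoping $\x^{m-1}-\y^{m-1}$ into $m-1$ terms, each of the form $\P$ applied to $m-2$ copies of $\x$ or $\y$ together with a single copy of $\x-\y$, and bounding each term by $\|\x-\y\|_1$: since $\P$ is column-stochastic one has $\|\P\z_1\cdots\z_{m-1}\|_1 \le \prod_j\|\z_j\|_1$, and here the remaining factors satisfy $\|\x\|_1=\|\y\|_1=1$. Substituting into the error recursion and using $\widetilde\alpha\,\alpha(m-1)=\alpha^2$ gives
\begin{equation*}
(1-\alpha^2)\|\s_{\ell+1}-\s\|_1 \le (1-\widetilde\alpha)\|\s_\ell - \s\|_1,
\end{equation*}
so the per-step contraction factor is $\big(1-\alpha/(m-1)\big)/(1-\alpha^2)$. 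Since $\alpha<1/(m-1)$ forces $\alpha^2<\alpha/(m-1)$, this factor is strictly less than one, which simultaneously yields convergence (necessarily to the positive solution, by the uniqueness already established) and, upon iterating and bounding $\|\s_0-\s\|_1\le\|\s_0\|_1+\|\s\|_1=2$, the claimed estimate.

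The main obstacle is conceptual rather than computational: one must resist treating the inner map abstractly through the global Lipschitz constant of $\x\mapsto\P_\PR\x^{m-1}$, since that route cancels only one factor of $\alpha$ and delivers the weaker denominator $1-\alpha$. Recovering the sharp $1-\alpha^2$ requires exploiting the additive PageRank structure twice—once to cancel the teleportation vector and once to extract the hidden factor $\alpha$ inside $\alpha\P$—so that the two powers of $\alpha$ combine with the relation $\widetilde\alpha(m-1)=\alpha$. Once this is seen, the telescoping Lipschitz bound (stated carefully in the $1$-norm adapted to column-stochastic tensors) is the only step needing attention, and everything else is routine.
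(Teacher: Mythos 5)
Your argument is correct, but there is no in-paper proof to compare it against: Theorem \ref{theo:inner_outer} is stated without proof, imported verbatim from \cite{gleich2015}, and the paper only supplies the surrounding observation (which you also use) that each inner problem in \eqref{eq:in_out_it} is itself a multilinear PageRank problem with damping $\widetilde\alpha=\alpha/(m-1)<(m-1)^{-1}$, hence uniquely solvable by Theorem \ref{theo:existence_uniquness}. Your reconstruction is, in substance, the argument of the cited reference (there written for $m=3$): the two decisive points---cancelling the teleportation term so that $\P_\PR\s_{\ell+1}^{m-1}-\P_\PR\s^{m-1}=\alpha\bigl(\P\s_{\ell+1}^{m-1}-\P\s^{m-1}\bigr)$, and the telescoping estimate $\|\P\x^{m-1}-\P\y^{m-1}\|_1\le(m-1)\|\x-\y\|_1$ for stochastic $\x,\y$, which turns the implicit term into the factor $\widetilde\alpha\,\alpha\,(m-1)=\alpha^2$ absorbed on the left-hand side---are exactly the ones used there, followed by the same $\|\s_0-\s\|_1\le\|\s_0\|_1+\|\s\|_1=2$ bound. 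So this is the intended proof rather than a genuinely different route; your closing remark explaining why a naive global Lipschitz treatment of $\x\mapsto\P_\PR\x^{m-1}$ only yields the weaker denominator $1-\alpha$ is a worthwhile addition. The only caveat is cosmetic: strict contraction, and hence the convergence claim, requires $\alpha>0$ (at $\alpha=0$ your inequality $\alpha^2<\alpha/(m-1)$ degenerates to equality and the iteration is stationary), an implicit assumption shared by the original statement.
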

\noindent In \cite{gleich2015}, the inner-outer  iteration \eqref{eq:in_out_it} has been proved to be one of the most effective methods to seek a solution of the multilinear {P}age{R}ank problem even when $\alpha>{1}/({m-1})$. We will see in Section \ref{subsection:extrapolated_innerouter} that suitably coupling  extrapolation techniques with the implicit fixed-point iteration \eqref{eq:in_out_it} improves the effectiveness and efficiency of this iterative method.

%
%
%
%
\section{Extrapolation of  power method and fixed-point iterations} \label{sec:extr_framework}
In the past years, methods for accelerating the convergence of a  sequence $({\bf s}_{\ell})$ of objects in a vector space (e.g. scalars, vector,
matrices) have been developed and successfully applied to a variety of problems, such as the solution of linear and nonlinear
systems, matrix eigenvalue problems, the computation of matrix functions, the solution of integral equations, and many others
\cite{bouhamidi2011extrapolated,brezinski2013extrapolationbook,brezinski2017simplified,brezinski1998extrapolation}.

In some situations, these methods are ad-hoc modifications of the methods that
produced the corresponding original sequences.
However, often, the process that generates $({\bf s}_{\ell})$  is too cumbersome for this approach
to be practical. Thus, a  common and successful solution is to transform the original sequence
$({\bf s}_{\ell})$  into a new sequence $({\bf t}_{\ell})$  by means
of a {\it sequence transformation} $T$, which, under some assumptions,
converges faster to the limit or, in the case of diverging sequences,  the antilimit ${\bf s}$  of $({\bf s}_{\ell})$.

The idea behind a sequence transformation is to assume that the
sequence  to be transformed behaves like a model sequence whose limit $\bf s$ (or antilimit in the case of divergence) can be exactly computed by a finite algebraic process.
The set ${\bf {\cal K}}_T$ of these model sequences is called the kernel of the transformation. If the sequence
$({\bf s}_{\ell})$ belongs to the kernel ${\bf {\cal K}}_T$, then the transformed sequence ``converges in one step'', namely ${\bf t}_\ell={\bf s}$ for all $\ell$. Instead, if the sequence $({\bf s}_{\ell})$ does not belong to the kernel but it is close enough to it, then the  sequence transformation often produces a remarkable convergence acceleration.

Among the existing sequence transformations and acceleration methods (also called {\it extrapolation methods}), the Shanks transformation
\cite{shanks1955non} is arguably the best
all-purpose method for accelerating the convergence of a sequence.
The kernel of the vector Shanks' transformation can be represented by the difference equation
\begin{equation}
	a_0({\bf s}_\ell-{\bf s})+\cdots +a_k({\bf s}_{\ell+k}-{\bf s})=0,
\quad \ell=0,1,\ldots
\label{kernelnew}
	\end{equation}
with $ a_i \in \mathbb R, a_0a_k \neq 0$ and $a_0+\cdots+a_k \neq 0$. 
Then, assuming without loss of generality that $a_0+\cdots+a_k=1$, the Shanks extrapolated sequence ${\bf t_\ell}= {\e}_k({\bf s}_\ell)$ can be written as
$$
{\e}_k({\bf s}_\ell) =	a_0{\bf s}_\ell+\cdots +a_k{\bf s}_{\ell+k}.
$$
{Several 
vector sequence transformations
based on  such a kernel 
exist and have
been introduced and studied by various authors (the $\varepsilon-algorithms$, the MMPE, the MPE, the RRE, the E-algorithm, etc., see \cite{brezinski2013extrapolationbook} for a review).}
For all of them,
the following theorem  holds:
\begin{theorem} \label{THkernel}
	If there exist $\;a_0, \ldots, a_k$ with $a_0a_k \neq 0$ and $a_0+ \cdots + a_k \neq 0$ such that, for all $\ell$,
	$$
	a_0({\bf s}_\ell-{\bf s})+\cdots +a_k({\bf s}_{\ell+k}-{\bf s})=0,
	$$
	then, for all $\ell$, ${\e}_k({\bf s}_\ell)={\bf s}$.
\end{theorem}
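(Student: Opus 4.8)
The plan is to recognize that the statement reduces to a one-line algebraic identity, once the normalization $a_0+\cdots+a_k=1$ introduced just before the theorem is taken into account. First I would recall that the extrapolated value $\e_k(\s_\ell)=a_0\s_\ell+\cdots+a_k\s_{\ell+k}$ is defined under the assumption $a_0+\cdots+a_k=1$. Since the hypothesis only guarantees $a_0+\cdots+a_k\neq 0$, I would begin by observing that this normalization may always be imposed: the kernel difference equation is homogeneous in the coefficients $a_i$, so dividing every $a_i$ by the nonzero sum $a_0+\cdots+a_k$ leaves the condition unchanged while forcing the sum to equal $1$.

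Next I would simply expand the kernel condition for a fixed $\ell$. Separating the terms that contain $\s$ from those that contain the sequence elements gives
$$
a_0\s_\ell+\cdots+a_k\s_{\ell+k}=(a_0+\cdots+a_k)\,\s .
$$
With the normalization $a_0+\cdots+a_k=1$, the right-hand side collapses to $\s$, while the left-hand side is, by definition, precisely $\e_k(\s_\ell)$. This yields $\e_k(\s_\ell)=\s$, and since $\ell$ was arbitrary the identity holds for every $\ell$, which is the claim.

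The only delicate point, and the closest thing to an obstacle in an otherwise immediate argument, is the bookkeeping around the normalization: one must make explicit that the coefficients in the definition of $\e_k$ coincide, up to the common rescaling, with those appearing in the kernel equation, and that this rescaling is legitimate exactly because $a_0+\cdots+a_k\neq 0$. I would also remark that the remaining hypothesis $a_0a_k\neq 0$ is not needed for this particular identity; it serves to guarantee that the difference equation is genuinely of order $k$ and that the associated transformation is well posed, but it plays no role in deriving $\e_k(\s_\ell)=\s$.
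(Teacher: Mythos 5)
Your normalization step is fine, and it is exactly the reduction the paper itself makes (``assuming without loss of generality that $a_0+\cdots+a_k=1$''). The genuine gap is in the sentence ``the left-hand side is, by definition, precisely $\e_k(\s_\ell)$''. For the transformations this theorem is about --- the $\varepsilon$-algorithms, MMPE, MPE, RRE, and in particular the topological Shanks transformations \eqref{eq:shanks_transformations1}--\eqref{eq:shanks_transformations2} --- the coefficients entering $\e_k({\bf s}_\ell)$ are \emph{not} the coefficients of the kernel relation \eqref{kernelnew}: they are quantities $a_i^{(\ell,k)}$ computed from the sequence itself (and from the functional $\y$) by solving the linear system \eqref{eq:linear_system_neq_sequence}. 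The algorithm has no access to the kernel coefficients; the entire content of the theorem is that the coefficients it computes from the data nevertheless reproduce ${\bf s}$. Read your way, the statement becomes a tautology about a linear combination nobody computes, and the theorem would say nothing about what STEA or MPE actually output. You half-noticed this (``one must make explicit that the coefficients in the definition of $\e_k$ coincide \dots with those appearing in the kernel equation'') but dismissed it as bookkeeping, when it is the mathematical heart of the statement.

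The missing step is the identification of the two sets of coefficients. Difference the normalized kernel relation to get $a_0\Delta{\bf s}_{\ell}+\cdots+a_k\Delta{\bf s}_{\ell+k}={\bf 0}$ for all $\ell$, and apply $\y$ to obtain
$$
a_0\langle\y,\Delta{\bf s}_{\ell+i}\rangle+\cdots+a_k\langle\y,\Delta{\bf s}_{\ell+k+i}\rangle=0 ,\qquad \text{for all } i,
$$
which, together with $a_0+\cdots+a_k=1$, shows that the normalized kernel coefficients solve the system \eqref{eq:linear_system_neq_sequence} that defines the transformation. Provided that system is nonsingular --- which is implicitly assumed for the transformation to be well defined, and is where hypotheses such as $a_0a_k\neq 0$ and the independence of the $\Delta{\bf s}_{\ell+i}$ enter, so your closing remark that $a_0a_k\neq 0$ ``plays no role'' is also off --- its unique solution must coincide with the normalized kernel coefficients, i.e.\ $a_i^{(\ell,k)}=a_i$. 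Only after this identification does your final computation, $\e_k({\bf s}_\ell)=\sum_i a_i{\bf s}_{\ell+i}=\big(\sum_i a_i\big){\bf s}={\bf s}$, say anything about the sequence the algorithm actually produces.
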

That is, if the sequence $({\bf s}_{\ell})$ belongs to the kernel ${\bf {\cal K}}_T$, then it is transformed
into a constant sequence whose terms are all equal to the limit (or the antilimit)   $\bf s$.

The Topological Shanks transformations are arguably the most general Shanks-based transformations. They have a kernel of the form \eqref{kernelnew}, and
they can be applied to sequences of elements of an arbitrary  vector space. They
can be recursively implemented by the Topological Epsilon Algorithms, in short TEA's \cite{brezinski1975generalisations}.
They allow to consider not only sequences of scalars or vectors of ${\mathbb R}^n$, but also matrices or tensors.
However, their main drawback is the difficulty of implementing the related algorithms.
Recently, simplified versions of these algorithms, called the {Simplified Topological Epsilon Algorithms}
(STEA's), have been introduced. These simplified algorithms have three main advantages with respect to the original algorithms:
the numerical stability can be sensibly improved, the rules defining the extrapolated sequence are simpler than the original ones and
the cost is reduced both in terms of memory allocation and in terms of operations to be performed. In the next Section \ref{sec:topological_epsilon_algorithm}, we briefly review  these
topological  Shanks transformations and the STEA algorithms. For further details, see
 \cite{brezinski2014simplified,brezinski2017simplified}. 

The use of Shanks-type acceleration techniques has proved to be very effective in order to speed up the computation of  the {P}age{R}ank
vector of a graph \cite{brezinski2006pagerank,brezinski2008rational,brezinski2005extrapolation,kamvar2003extrapolation}. To briefly review why, note that,  given the PageRank matrix \eqref{PRmatrix}, with $\alpha \in [0,1)$, we know that the power iterations
$\s_{\ell+1} = P_\PR \s_{\ell} = P^\ell_\PR \v, \ell = 0,1, \ldots$, with
$\bf \s_0 = \v$,
converge to the unique vector $\s = P_\PR \s$ and this vector can be expressed explicitly as a polynomial of $P_\PR$. In fact, if $\Pi_p$ is the minimal polynomial of
$P_\PR$ for the vector $\v$ (with $p \le n$ being its degree), since $P_\PR$ has an eigenvalue equal to $1$, we have
$  \Pi_p(\lambda)=(\lambda-1)Q_{p-1}(\lambda)$, where $Q_{p-1}$ is a polynomial
of degree $p-1$.
Thus
$ \s= Q_{p-1}(P_\PR)\v$. At the same time we have
$ \s= P^\ell_\PR \s = Q_{p-1}(P_\PR) \, \s_\ell$
and so the vectors $\bf \s_\ell - \s$ satisfy a difference equation {whose form is exactly the one of} 
the Shanks kernel \eqref{kernelnew}, that is
$$
a_0({\bf s}_\ell-{\bf s})+\cdots +a_{p-1}({\bf s}_{\ell+p-1}-{\bf s})=0,
$$

for some $a_0, \dots, a_{p-1}\in \mathbb R$. Thus it holds ${\e}_{p-1}({\bf s}_\ell)={\bf s}, \forall \ell$. If we take
$k-1\ll p-1$, and if we consider a polynomial $\tilde Q_{k-1}$ of degree $k-1$ approximating the
polynomial $Q_{p-1}$, the particular structure of the PageRank matrix ensures that the sequence  ${\e}_{k-1}({\bf s}_\ell)$ computed, for instance,
by a STEA algorithm (a) is a good approximation of $\s$ and (b) converges much faster than the original sequence produced
by the power method.

Since the multilinear PageRank problem is a generalization of the PageRank model (and it reduces to it when $m=2$ \cite{gleich2015}), we test here the performance of the same class of Shanks-based algorithms on it. 

The numerical results show a remarkable speed up both in terms of number of iterations and in terms of execution time and, to our opinion, this  promising converging behavior is largely due to the several similarities between the case $m=2$ and the higher-order case $m>2$. 

The extrapolation algorithms can be coupled with a restarting technique which is particularly suited for fixed-point problems and that roughly proceeds as follows.
Assume that we have to find a fixed-point $\bf s$ of a mapping $F: {\mathbb R}^n \rightarrow
{\mathbb R}^n$.  We compute a certain number of basic iterates ${{\bf s}_{\ell+1}}= F({\bf s}_\ell)$ from a given ${\bf s}_0$.
Then we apply the extrapolation algorithm to them, and we restart the
basic iterates from the computed extrapolated term.
The advantage of this approach is that, under suitable regularity assumptions on $F$ and if the number of extrapolation steps $k$ is large enough, the sequence generated in this way  converges  quadratically to the fixed point of $F$ \cite{le1992quadratic}.
 The details of this restarted procedure and the application to  the specific multilinear {P}age{R}ank setting is described in  Section \ref{sec:stea_for_powermethod}. 

\subsection{The topological Shanks transformations and algorithms} \label{sec:topological_epsilon_algorithm}
Consider a sequence of elements $({\bf s}_\ell)$ of a topological vector space $E$  with $\lim_{\ell \to +\infty } {\bf s}_\ell=\bf{s}$.
The so-called first and second Topological Shanks Transformations, starting from  the original
sequence and given an arbitrary element of the dual space $\y \in E^*$, respectively produce two new sequences $(\widehat{\e}_k({\bf s}_\ell))$ and $(\widetilde{\e}_k({\bf s}_\ell))$
where each term uses $2k+1$ vectors and has the form
\begin{align}
& \mbox{First transformation~~} \widehat{\e}_k({\bf s}_\ell)=a_0^{(\ell,k)}{\bf s}_\ell+\cdots+a_k^{(\ell,k)}{\bf s}_{\ell+k}, \label{eq:shanks_transformations1}\\
& \mbox{Second transformation~~} \widetilde{\e}_k({\bf s}_\ell)=a_0^{(\ell,k)}{\bf s}_{\ell+k}+\cdots+a_k^{(\ell,k)}{\bf s}_{\ell+2k}, \label{eq:shanks_transformations2}
\end{align}
where the $a_i^{(\ell,k)}$ are the solutions of the system
\begin{equation} \label{eq:linear_system_neq_sequence}
\left\{
\begin{array}{c}
a_0^{(\ell,k)}+ \cdots + a_k^{(\ell,k)}=1, \\
a_0^{(\ell,k)}\langle \y, \Delta {\bf s}_{\ell + i}\rangle+ \cdots + a_k^{(\ell,k)}\langle\y, \Delta {\bf s}_{\ell+k+i}\rangle=0, \;\;\; i=0, \dots, 2k-1,
\end{array}
\right.
\end{equation}
with $\Delta \s_i:=\s_{i+1}-\s_i$ and where $\langle \cdot , \cdot \rangle$ is the duality product, which reduces to the usual inner product between two vectors when $E = \mathbb{R}^n$. The first relation is a normalization condition
that does not restrict the generality.

For both of these transformations,  Theorem \ref{THkernel} holds and
if the sequence satisfies \eqref{kernelnew}, then
$$
\widehat{\e}_k({\bf s}_\ell)={\bf s} \quad {\rm and} \quad \widetilde{\e}_k({\bf s}_\ell)={\bf s}.
$$

The topological $\varepsilon$-transformations can be used also when the original sequence does not belong to
the kernel, i.e.\ it does not satisfy \eqref{kernelnew}. In this case the coefficients
$a_i$ depend on $\ell$ and $k$, and this dependence is  emphasized by the upper indices in \eqref{eq:shanks_transformations1} and
\eqref{eq:shanks_transformations2}.

Recursive algorithms allow to compute the terms $\widehat{\e}_k({\bf s}_\ell)$, $\widetilde{\e}_k({\bf s}_\ell)$ of
the new sequences without solving explicitly the linear system \eqref{eq:linear_system_neq_sequence}.	
Moreover, the simplified forms of the topological $\varepsilon$-algorithms \cite{brezinski2014simplified, brezinski2017simplified}, allow an implementation  that avoids the manipulation of the elements of $E^*$
(in contrast to the topological $\varepsilon$-algorithms \cite{brezinski1975generalisations}), since the linear
functional $\y$ is applied only to the terms of the initial sequence $({\bf s}_\ell)$ and is not used in the recursive rule.
The simplified algorithms implementing the first transformation
\eqref{eq:shanks_transformations1} and the second one \eqref{eq:shanks_transformations2} are usually denoted by STEA1 and STEA2, respectively. In this work we focus mostly on STEA2 as it requires less memory storage. Moreover, among the four  existing equivalent updating rules, we use the one we observed being the most effective, defined by 
\begin{equation}
{\widetilde{\mathbf{\varepsilon}}}^{(\ell)}_{2k+2}={\widetilde{\mathbf{\varepsilon}}}^{(\ell+1)}_{2k}+
{\frac{\epsilon_{2k+2}^{(\ell)}		 -\epsilon_{2k}^{(\ell+1)}}{\epsilon_{2k}^{(\ell+2)}-\epsilon_{2k}^{(\ell+1)}}}({\widetilde{\mathbf{\varepsilon}}}^{(\ell+2)}_{2k}-
{\widetilde{\mathbf{\varepsilon}}}^{(\ell+1)}_{2k}), \;\;k,\ell=0,1,\dots
\end{equation}
where $\widetilde{\mathbf{\varepsilon}}^{(n)}_{0}=\mathbf{s}_\ell \in E$ and the scalar quantities
$\epsilon_{i}^{(j)}$ are computed by the Wynn's scalar $\varepsilon$-algorithm \cite{wynn1956} applied to
${s_{\ell}} = \langle{\bf{ y}}, {{\bf{ s}}}_{\ell}\rangle$. Note that this rule is very simple: it contains only sums and differences between elements of the vector space  and it relies only on three terms of a triangular scheme. Moreover, it has been proved in \cite{brezinski1975generalisations} that the identity
${\widetilde{\mathbf{\varepsilon}}}^{(\ell)}_{2k}=\widetilde{\mathbf{e}}_k(\mathbf{s}_\ell)$  holds.


\subsection{Restarted extrapolation method for multilinear {P}age{R}ank}\label{sec:stea_for_powermethod}

When dealing with fixed-point problems $F({\bf s})={\bf s}$, a common and pertinent choice is to couple the extrapolation method with a restarting technique\cite{brezinski2017simplified}.
If we consider the STEA's algorithms,  the
general restarted method is presented in the following Algorithm~\ref{alg:restarted_method}.
\begin{algorithm}
	\caption{Restarted extrapolation method}\label{alg:restarted_method}
	\begin{algorithmic}[1]
			\State {Choose $2k, \;\mathsf{cycles}\; \in \mathbb{N}, \x_0 \hbox{ and } \y \in E^{*}$}
			\For {$i=0,1, \ldots, \mathsf{cycles}$ (outer iterations)}
			\State Set ${\bf s}_0=\x_i$
            \State Compute ${s_{0}} = \langle{\bf{ y}}, {{\bf{ s}}}_{0}\rangle$
			\For{$\ell=1, \dots, 2k$ (inner iterations)}
			\State	Compute ${\bf s}_\ell=F({\bf s}_{\ell-1})$
			\State	Compute ${s_{\ell}} = \langle{\bf{ y}}, {{\bf{ s}}}_{\ell}\rangle$
			\EndFor
			\State	Apply STEA to ${\bf s}_0, \dots {\bf s}_{2k}$  and ${s}_0, \dots {s}_{2k}$ to compute $\widehat{\e}_{k}({\bf s}_\ell)$ or $\widetilde{\e}_{k}({\bf s}_\ell)$
			\State	Set $\x_{i+1}= \widehat{\e}_{k}({\bf s}_\ell)$ or $\x_{i+1}= \widetilde{\e}_{k}({\bf s}_\ell)$
			\State	Choose ${\bf y} \in E^*$
			\EndFor
		\end{algorithmic}
\end{algorithm}
It is important to remark that, when $k=n$, where $n$ is the dimension of the problem, the sequence $(\widehat{\e}_{k}({\bf s}_\ell))$ (or $( \widetilde{\e}_{k}({\bf s}_\ell))$)
converges quadratically to $\mathbf s$ under suitable regularity assumptions \cite{le1992quadratic}.
All the schemes we took into consideration in Section \ref{sec:theoretical_background}
are of (implicit or explicit) fixed-point iteration type, thus for all of them we consider restarted simplified topological methods. In our case, $F(\bf s_\ell)$ is either the shifted power method  introduced in Section \ref{subsec:power_method}
(see equation \eqref{methd:SSHOPM}) or the inner-outer method of Section \ref{subsec:inner_outer} (see
equation \eqref{eq:in_out_it}), and $E=\mathbb{R}^n$.

Concerning the computational complexity and the storage requirements, in our experimental investigations, we used the public available  software EPSfun Matlab toolbox, na44 package in Netlib
\cite{bmrz2017software} that contains optimized versions of the topological algorithms we used. In particular, the STEA2 algorithm contained in this package is  implemented by using an ascending diagonal technique. The $2k+1$ vectors ${\bf s}_\ell$ are computed together with the extrapolation scheme, and thus only  $k+2$ vectors of dimension $n$ have to be stored in order to  compute $\widetilde{\e}_{k}({\bf s}_\ell)$. The duality products, that in our cases are always inner products between real vectors, are $2k+1$ for each outer cycle.

Thus, the practical implementation and the performance of the methods relies on two key parameter choices: the choice of $k$ and of $\y \in E^*$. As described above, the choice of $k$  is connected to the memory requirement and determines the quality of the speed-up performance. While we can choose $k\approx n$ for relatively small problems, when the dimension $n$ of the problem is large we selected a small  value of $k$  if compared to $n$, as suggested in \cite{brezinski2014simplified,brezinski2017simplified}. This is the case, for instance, of the real-world  examples of Section \ref{sec:real_world_problems}.
Concerning the choice of $\y \in E^*$, this is a well-known critical point in the topological Shanks transformations and it is usually addressed by model-dependent heuristics. In fact, no general theoretical result has been obtained so far concerning the  selection of an optimal $\y \in E^*$, not even for the case $E=\mathbb{R}^n$.
In our examples we have chosen $\y = \widetilde{\e}_{k}({\bf s}_\ell)$, the last extrapolated term. The quality of this choice is supported by the remarkable performance we obtained and by the fact that in all our tests the resulting extrapolated vectors computed with such a choice of $\y$ are always nonnegative and stochastic. 
Nevertheless, in Section \ref{sec:stoc_iter}, we present further preliminary
results concerning this open and debated problem. Our analysis is particularly important for the multilinear {P}age{R}ank problem we are considering as it deals with the existence and the possible computation of a vector $\y$ that guarantees that the extrapolated terms obtained with the topological $\varepsilon$-transformation are stochastic. We present these theoretical results only as a first attempt to study in depth this unsolved problem, but we do not use them in the numerical experiments as it would have prevented us to fully exploit the advantages given by the use of the optimized package EPSfun. We intend to continue our study in forthcoming investigations.

\subsection{Choosing $\y \in \RR^n$ to enforce stochastic extrapolated vectors} \label{sec:stoc_iter}
As it is stated in Theorem \ref{theo:SSHOP} and Theorem \ref{theo:inner_outer}, the shifted power method and the inner-outer
method produce stochastic iterations. In this section, we will prove that for each outer cycle in Algorithm \ref{alg:restarted_method}
there exists a functional $\mathbf{y} \in E^*\;(= \mathbb{R}^n$ in our case) such that the coefficients
$a_0^{(\ell,k)},\;\dots,a_k^{(\ell,k)}$ in \eqref{eq:shanks_transformations1} and \eqref{eq:shanks_transformations2}
can be chosen nonnegative. Note that in this way  we are guaranteed that, using the normalization condition in
\eqref{eq:linear_system_neq_sequence}, the extrapolation procedure at Line 9 of Algorithm \ref{alg:restarted_method}
produces a stochastic vector (i.e.\ a vector with nonnegative entries that sum up to one).	

Let us point out that this section has just a theoretical interest: as it will be clear from the results presented later on in this section,
the vector $\y \in \mathbb{R}^{n}$ must be computed at each outer cycle in Algorithm \ref{alg:restarted_method} through a
computational procedure whose cost depends on the parameter $2k$ and increases accordingly.
In the numerical experiments we prefer to make a choice which does not require further computations and we choose
hence $\y=\widetilde{\e}_{k}({\bf s}_\ell)$ (or $\widehat{\e}_{k}({\bf s}_\ell)$); this is the most updated approximation of the solution at our disposal and, as
the numerical results will show, this choice works very well in practice.

Before proceeding let us define $b_i:=(\y, \Delta {\bf s}_{{\ell}+i})$ for $i=0,\dots,2k-1$. With this notation the matrix of the
linear system \eqref{eq:linear_system_neq_sequence} can be written as
\begin{equation}
S^{(\ell,k)}:= \begin{bmatrix}
1 & \dots & 1 \\
b_0 & \dots & b_{k} \\
\vdots & & \vdots \\
b_{k-1} & \dots & b_{2k-1}
\end{bmatrix}.
\end{equation}

Let us start by stating the following lemma which better explains the notation introduced above:

\begin{lemma}\label{lem:y_choiche}
	If $\Delta {\bf s}_{\ell+i}$,  $i=0,\dots, 2k-1$, are linearly independent vectors and $2k \leq n$, then for any choices
	of $2k$ real numbers $b_i$, there exists a vector $\mathbf{y} \in \mathbb{R}^{n}$ such that
	$b_i=(\mathbf{y}, \Delta {\bf s}_{\ell+i})$, for $i=0,\dots,2k-1$.
\end{lemma}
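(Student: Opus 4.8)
The plan is to recognize this as the standard fact that a linear map $\RR^n \to \RR^{2k}$ is surjective precisely when its $2k$ defining directions are linearly independent, and then either invoke surjectivity directly or exhibit $\y$ explicitly. First I would assemble the given data into a single linear system: collect the vectors $\Delta \s_{\ell+i}$, $i=0,\dots,2k-1$, as the rows of a $2k \times n$ matrix $D$, and collect the prescribed scalars into $\b=(b_0,\dots,b_{2k-1})^T$. Then the requirement $(\y,\Delta \s_{\ell+i})=b_i$ for all $i$ is exactly the system $D\y=\b$. By hypothesis the rows of $D$ are linearly independent, so $\mathrm{rank}(D)=2k$; since $2k\leq n$ this is compatible with $D$ having $n$ columns. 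A matrix of full row rank induces a surjective linear map, i.e.\ its column space is all of $\RR^{2k}$, so $D\y=\b$ is consistent for every right-hand side $\b$, which yields the desired $\y$.

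The route I would actually write out in full is the constructive one, since it makes the use of the hypothesis transparent. I would seek $\y$ inside the span of the directions themselves, $\y=\sum_{j=0}^{2k-1} c_j\,\Delta \s_{\ell+j}$. Substituting this ansatz into the constraints reduces them to $G\mathbf{c}=\b$, where $G$ is the $2k\times 2k$ Gram matrix with entries $G_{ij}=(\Delta \s_{\ell+i},\Delta \s_{\ell+j})$. Linear independence of the $\Delta \s_{\ell+i}$ makes $G$ symmetric positive definite, hence invertible, so $\mathbf{c}=G^{-1}\b$ is well defined and $\y=\sum_{j=0}^{2k-1} (G^{-1}\b)_j\,\Delta \s_{\ell+j}$ satisfies all $2k$ equations.

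There is no genuine obstacle here: the statement is just a reformulation of full-rank surjectivity. The only points deserving a word of care are (i) that $2k\leq n$ is exactly what permits $2k$ vectors in $\RR^n$ to be linearly independent, so the hypotheses are consistent, and (ii) being explicit that linear independence is what upgrades $\mathrm{rank}(D)\leq 2k$ to $\mathrm{rank}(D)=2k$ (equivalently, what makes $G$ nonsingular) — this is the only place the hypothesis is actually used.
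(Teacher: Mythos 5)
Your proof is correct, and while both of your routes establish the same underlying fact as the paper (full row rank implies surjectivity), the construction you carry out in detail is genuinely different from the paper's. The paper proceeds via the QR factorization $\Delta = QR$ of the $n \times 2k$ matrix $\Delta$ whose columns are the vectors $\Delta\mathbf{s}_{\ell+i}$: it solves $R^T\mathbf{g}=\mathbf{b}$ using the invertibility of $R$ and then closes with a rank-consistency (Rouch\'e--Capelli) argument to obtain $\mathbf{y}$ with $Q^T\mathbf{y}=\mathbf{g}$. You instead restrict $\mathbf{y}$ to the span of the $\Delta\mathbf{s}_{\ell+i}$ and solve $G\mathbf{c}=\mathbf{b}$ with the Gram matrix $G=\Delta^T\Delta$, whose positive definiteness is exactly a restatement of linear independence. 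The two constructions in fact produce the same vector: since $G=R^TR$, your $\mathbf{y}=\Delta G^{-1}\mathbf{b}$ equals $QR^{-T}\mathbf{b}$, the minimum-norm solution. What each approach buys: your packaging is more self-contained and makes the role of the hypothesis transparent (no factorization theorem, no separate consistency step), whereas the paper's QR formulation doubles as a numerically stable recipe for actually computing $\mathbf{y}$, which matters here because the paper remarks immediately after the lemma that the proof ``provides us a constructive way to compute'' the vector $\mathbf{y}$.
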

\begin{proof}
	Consider the QR-decomposition of the matrix $\Delta=[\Delta {\bf s}_{\ell}, \dots, \Delta {\bf s}_{\ell+2k-1} ]$, i.e.,
	$\Delta=QR$ being $Q \in \mathbb{R}^{n \times 2k}$ such that $Q^TQ=I_{2k}$ and $R$ upper triangular of rank $2k$.
	Setting $Q^T\y=\g$ for some $\g \in \mathbb{R}^{2k}$, we have $\y^T\Delta = \g^TR$. Since $R$ is invertible, for any
	choice of the vector $\mathbf {b} \in \mathbb{R}^{2k}$, there exists a unique set of coefficients $\bf g$ solution of the linear
	system $R^T\bf g=\bf b$. The result follows by observing that $2k=\mathrm{rank}(Q^T)=\mathrm{rank}([Q^T|\mathbf{b}])$ for any
	$\mathbf{b} \in \mathbb{R}^{2k}$.
\end{proof}

Note that the lemma above actually provides us a constructive way to compute a vector $\bf y$ such that
$b_i= (\mathbf{y}, \Delta {\bf s}_{\ell+i})$, $i=0,\dots, 2k-1$. Moreover, at the same time, it shows that the explicit
computation of $\mathbf{y}$ is not necessary anymore to prove its existence: the extrapolation process is well defined as soon
as  the coefficients $b_i$ for $i=0 \dots 2k-1$ are chosen.  Observe, finally, that the linear independence of the
vectors $\Delta {\bf s}_{\ell+i}$ for $i=0,\dots, 2k-1$ is not a restrictive hypothesis since it is equivalent to the linear
independence of the vectors $\mathbf{s}_{\ell +i}$ and, if the $\mathbf{s}_{\ell +i}$ are not linear independent, we can always
select a maximal set of linearly independent vectors and use them in place of the $\mathbf{s}_{\ell +i}$. This would not affect  the final result of a Shanks-based
extrapolation process since it essentially is a linear combination (see equations \eqref{eq:shanks_transformations1}
and \eqref{eq:shanks_transformations2}).

Define  $J$ as the matrix with all ones on the anti-diagonal. Note that $J^2=I$, thus we can write the linear system
\eqref{eq:linear_system_neq_sequence}	as
\begin{equation*}
S^{(\ell,k)}JJ\mathbf{a}^{(\ell,k)}=\mathbf{e}_1,
\end{equation*}

where
\begin{equation*}
S^{(\ell,k)}J=\begin{bmatrix}
1 & \dots & 1 \\
b_k & \dots & b_{0} \\
\vdots &  & \vdots \\
b_{2k-1} & \dots & b_{k-1}
\end{bmatrix},
\end{equation*}
and $\mathbf{e}_1=[1, 0,\dots, 0]^T$.
Thus, if we define the Toeplitz matrix $T$ as
\begin{equation}\label{eq:toeplitz_matrix}
T:=\begin{bmatrix}
b_{k-1} & b_{k-2} & \dots & b_0 & {0}\\
b_k & \ddots & \ddots &  & b_{0} \\
\vdots & \ddots & \ddots & \ddots & \vdots \\
\vdots &  & \ddots  & \ddots & b_{k-2} \\
b_{2k-1} & \dots & \dots & b_k & b_{k-1}
\end{bmatrix},
\end{equation}
we can write $S^{(\ell,k)}J$ as
\begin{equation*}
S^{(\ell,k)}J=T+\mathbf{e}_1\mathbf{v}^T
\end{equation*}
being  $\mathbf{v}=[1-b_{k-1},\dots, 1-b_0, 1  ]$. In the next Theorem  \ref{eq:theorem_sufficient_conditions} we give  sufficient
conditions for the $b_i$'s in  order to guarantee that the resulting coefficients $a_0^{(\ell,k)},\;\dots,a_k^{(\ell,k)}$ are all
nonnegative. We assume that the matrix $S^{(\ell,k)}$ of the linear system \eqref{eq:linear_system_neq_sequence} (and hence the
matrix $S^{(\ell,k)}J$) is invertible. To this end, we need one further lemma:
\begin{lemma} \label{lem:I_geq_H}
	If $T$ is invertible, $I \geq T$ (element wise) and $\|-T+1\|<1$ for some matrix norm, then $T^{-1}$  is a nonnegative matrix.
\end{lemma}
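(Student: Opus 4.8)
The plan is to reduce the claim to the convergence and sign structure of a Neumann series. First I would introduce the matrix $N := I - T$, interpreting the scalar ``$1$'' appearing in the norm hypothesis as the identity matrix $I$, consistently with the element-wise inequality $I \geq T$. With this substitution the hypothesis $I \geq T$ becomes $N \geq 0$ element-wise, while the norm hypothesis becomes $\|N\| < 1$ for some submultiplicative matrix norm.

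Next I would invoke the standard inequality $\rho(N) \leq \|N\| < 1$ relating the spectral radius to any submultiplicative norm. This guarantees that $\rho(N) < 1$, which on the one hand confirms that $T = I - N$ is invertible (the invertibility assumption is in fact implied here), and on the other hand ensures convergence of the Neumann series, giving the representation
$$
T^{-1} = (I - N)^{-1} = \sum_{j=0}^{\infty} N^j.
$$

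The key step is then to observe that nonnegativity is preserved throughout this series. Since $N \geq 0$ element-wise, each power $N^j$ is a product of nonnegative matrices and is therefore itself nonnegative element-wise; consequently every finite partial sum $\sum_{j=0}^{M} N^j$ is nonnegative. Passing to the limit $M \to \infty$, which exists by the convergence established above, preserves the element-wise inequality, yielding $T^{-1} \geq 0$. This is exactly the desired conclusion.

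I do not anticipate a serious obstacle, since this is essentially the classical characterization of matrices with nonnegative inverse via the Neumann series (closely related to the theory of $M$-matrices). The only points requiring care are the interpretation of the symbol ``$1$'' as the identity matrix and the verification that the chosen matrix norm is submultiplicative, so that $\rho(N) \leq \|N\|$ holds. Should the given norm fail to be submultiplicative, I would simply dominate it by an equivalent submultiplicative norm, or argue directly that $\|N\| < 1$ in a submultiplicative norm forces the geometric decay $\|N^j\| \leq \|N\|^j \to 0$, which again secures convergence of the series.
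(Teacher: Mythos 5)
Your proposal is correct and follows essentially the same route as the paper: both expand $T^{-1}$ as the Neumann series $\sum_{j\geq 0}(I-T)^j$, which converges because $\|I-T\|<1$, and conclude nonnegativity term by term from the element-wise inequality $I \geq T$. Your added care about interpreting the ``$1$'' in the statement as the identity matrix, requiring submultiplicativity of the norm, and noting that invertibility of $T$ is in fact implied by the norm hypothesis are sensible refinements of the paper's terser argument, not a different method.
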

\begin{proof}
	Since $T$ is invertible, we have
	\begin{equation}
	T^{-1}=(I-(-T+I))^{-1}=\sum_{j=0}^{+ \infty}(-T+I)^j.
	\end{equation}
	where the  Neumann expansion holds since $\|-T+I\|<1$.
	The result follows using the hypothesis $I \geq T$.
\end{proof}
The following main theorem holds
\begin{theorem} \label{eq:theorem_sufficient_conditions}
	There exists a choice of the coefficients $b_i$ for $i=0,\dots, 2k-1$ such that $a_0^{(\ell,k)},\;\dots,a_k^{(\ell,k)}$ in
	\eqref{eq:linear_system_neq_sequence} are nonnegative.
\end{theorem}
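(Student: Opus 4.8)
The plan is to reduce the nonnegativity of the entire coefficient vector $\mathbf{a}^{(\ell,k)}$ to the nonnegativity of a \emph{single} column of $T^{-1}$, and then to invoke Lemma \ref{lem:I_geq_H}. First I would use the freedom granted by Lemma \ref{lem:y_choiche}: since (after possibly replacing the $\mathbf{s}_{\ell+i}$ by a maximal linearly independent subset) the scalars $b_i=(\mathbf{y},\Delta\mathbf{s}_{\ell+i})$ can be prescribed arbitrarily, it suffices to exhibit values $b_0,\dots,b_{2k-1}$ for which the solution of \eqref{eq:linear_system_neq_sequence} has nonnegative entries; a corresponding $\mathbf{y}$ then exists by Lemma \ref{lem:y_choiche}. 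Using $J^2=I$ together with the already established identity $S^{(\ell,k)}J=T+\mathbf{e}_1\mathbf{v}^T$, the system $S^{(\ell,k)}\mathbf{a}^{(\ell,k)}=\mathbf{e}_1$ rewrites as $(T+\mathbf{e}_1\mathbf{v}^T)\mathbf{w}=\mathbf{e}_1$, where $\mathbf{w}=J\mathbf{a}^{(\ell,k)}$ is just the entrywise reversal of $\mathbf{a}^{(\ell,k)}$. In particular $\mathbf{a}^{(\ell,k)}\geq 0$ if and only if $\mathbf{w}\geq 0$, so it is enough to control the sign of $\mathbf{w}$.

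The crux is to apply the Sherman--Morrison formula to the rank-one update $\mathbf{e}_1\mathbf{v}^T$. Writing $\mathbf{u}=T^{-1}\mathbf{e}_1$ for the first column of $T^{-1}$ and $c=\mathbf{v}^T\mathbf{u}$, a direct computation collapses the rank-one correction into a pure rescaling:
\begin{equation*}
\mathbf{w}=(T+\mathbf{e}_1\mathbf{v}^T)^{-1}\mathbf{e}_1=\mathbf{u}-\frac{\mathbf{u}\,(\mathbf{v}^T\mathbf{u})}{1+\mathbf{v}^T\mathbf{u}}=\frac{1}{1+c}\,\mathbf{u}.
\end{equation*}
Thus $\mathbf{w}$, and hence $\mathbf{a}^{(\ell,k)}$, is merely a scalar multiple of the first column of $T^{-1}$, so the whole sign pattern is governed by that single column together with the scalar $1/(1+c)$. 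The denominator is nonzero precisely because $S^{(\ell,k)}$ is assumed invertible, which is exactly the Sherman--Morrison nonsingularity condition $1+c\neq 0$.

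It then remains to choose the $b_i$ so that $T$ satisfies the hypotheses of Lemma \ref{lem:I_geq_H}. Since the diagonal of the Toeplitz matrix \eqref{eq:toeplitz_matrix} equals $b_{k-1}$ and its off-diagonal entries are the remaining $b_j$, I would take $b_{k-1}$ equal to (or just below) $1$ and the other $b_j$ nonpositive with $\sum_{j\neq k-1}|b_j|<1$; then $I\geq T$ entrywise and $\|I-T\|_\infty<1$, so Lemma \ref{lem:I_geq_H} gives $T^{-1}\geq 0$, whence $\mathbf{u}=T^{-1}\mathbf{e}_1\geq 0$ and $\mathbf{u}\neq 0$. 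The sign of the scalar factor is then pinned down for free by the normalization row: the first row of $T+\mathbf{e}_1\mathbf{v}^T$ is $(1,\dots,1)$, so the first component of $(T+\mathbf{e}_1\mathbf{v}^T)\mathbf{w}=\mathbf{e}_1$ yields $\sum_i w_i=1$, forcing $1+c=\sum_i u_i>0$ and hence $1/(1+c)>0$. Consequently $\mathbf{w}=\tfrac{1}{1+c}\mathbf{u}\geq 0$, and reversing indices via $J$ gives $\mathbf{a}^{(\ell,k)}\geq 0$ with $\sum_i a_i^{(\ell,k)}=1$, as required. The one step demanding genuine care is the Sherman--Morrison collapse of the rank-one correction into a single rescaling; once that is in hand, Lemma \ref{lem:I_geq_H} supplies the nonnegativity of $T^{-1}$ and the normalization row fixes the sign, so the rest is bookkeeping.
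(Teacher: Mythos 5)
Your proof is correct and follows essentially the same route as the paper's: the rewriting $S^{(\ell,k)}J=T+\mathbf{e}_1\mathbf{v}^T$, the Sherman--Morrison collapse of the solution onto a rescaled first column of $T^{-1}$, and Lemma \ref{lem:I_geq_H} applied to a Toeplitz matrix with (near-)unit diagonal and small nonpositive off-diagonal entries. The only deviations are harmless: the paper additionally takes $T$ symmetric (hence positive definite) and works with the $2$-norm where you use $\|\cdot\|_\infty$, and it deduces $1+\mathbf{v}^TT^{-1}\mathbf{e}_1>0$ from the nonnegativity of $\mathbf{v}$ and $T^{-1}\mathbf{e}_1$ rather than from the normalization row.
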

\begin{proof}
	Consider the symmetric Toeplitz matrix $T$ defined in \eqref{eq:toeplitz_matrix},  obtained by choosing $b_{k-1}=1$,
	$b_{k-i}=b_{k-2+i}<0$ for $i=2, \dots, k$, $b_{2k-1}=0,$ and such that $\sum_{i=2}^k|b_{k-i}| <1$. With this settings,
	$T$ is a symmetric positive definite matrix such that $I \geq T$ (element wise) and $\|-T+1\|_2<1$. Moreover,
	$S^{(\ell,k)}J=T+\mathbf{e}_1\mathbf{v}^T$ with $\mathbf{e}_1$ and $\mathbf{v}$ nonnegative; using the Sherman-Morrison
	formula, it holds
	\begin{equation}
	(S^{(\ell,k)}J)^{-1}=T^{-1}-\frac{1}{1+\mathbf{v}^TT^{-1}\mathbf{e}_1}T^{-1}\mathbf{e}_1\mathbf{v}^TT^{-1},
	\end{equation}
	and hence \begin{equation}
	J\begin{bmatrix}
	a_0^{(\ell,k)} \\
	\vdots \\
	a_k^{(\ell,k)}
	\end{bmatrix}=(S^{(\ell,k)}J)^{-1}\mathbf{e}_1=\frac{1}{1+\mathbf{v}^TT^{-1}\mathbf{e}_1}T^{-1}\mathbf{e}_1.
	\end{equation}
	The result follows from Lemma \ref{lem:I_geq_H}  observing that $T^{-1}\mathbf{e}_1$ is nonnegative and that
	$1+\mathbf{v}^TT^{-1}\mathbf{e}_1>0$.
\end{proof}

\section{Numerical results} \label{sec:numerical_results}

In this section we present several numerical experiments to demonstrate the advantages of the extrapolation framework we are proposing. In all our experiments we consider the relative $1$-norm residual
\begin{equation}\label{eq:residual}
r_1(\x)= \frac{\|\P_{\PR}\x^{m-1}-\x\|_1}{\|\x\|_1} \, ,
\end{equation}
evaluated on the current iteration step. We choose the $1$-norm as the generated sequences are stochastic and thus the relative $1$-norm residual boils down to $r_1(\x)=\|\alpha\P\x ^{m-1}+(1-\alpha)\v-\x \|_{1}$.

Sections \ref{sec:problemset1} and \ref{sec:problemset2} are devoted to analyze and highlight  the rate of convergence of
the accelerated sequence and to compare it to the one of the original sequence. To this end, we  run Algorithm \ref{alg:restarted_method} for  a prescribed number of inner and outer iterations (i.e.\ we fix the value of $k$ and $\mathsf{cycles}$)  but without any other stopping criterion. 
Results are shown in Figures \ref{fig:experiment1}--\ref{fig:experiment5_rand_tens}: Figures \ref{fig:experiment1}--\ref{fig:experiment_inout}  compare the convergence behavior of the methods on a number of small size benchmark test problems, whereas Figures \ref{fig:experiment1_rand_tens}--\ref{fig:experiment5_rand_tens} propose an analysis of the methods' performance on 100 random tensors of different moderate sizes via boxplots.

Section \ref{sec:real_world_problems}, finally, is devoted to analyze several real-world datasets of different moderate-to-large sizes. In  this section, in order to compare computational timings, we do not fix an a-priori number of iterations, instead we run  each method until the residual \eqref{eq:residual} is smaller than  $10^{-8}$.

In all the Figures we denote by $\x_{\ell}$ the vectors of the original sequence obtained by the {shifted fixed-point method} (SFPM) or by the inner-outer method (IOM). Whereas, for the restarted extrapolation method (STEA2) we denote by $\widetilde{\x}_{\ell}$ the vectors of the sequence generated by Algorithm \ref{alg:restarted_method}. In the plots of  Figures \ref{fig:experiment1}, \ref{fig:experiment2} and \ref{fig:experiment_inout} we highlight with a circle each restart of the outer loop, i.e.\ the  vector defined at Line 10 of Algorithm \ref{alg:restarted_method}.


The linear functional $\y$ is updated at the end of each outer cycle by choosing $\y=\widetilde{\e}_{k}({\bf s}_\ell)$, without any additional cost (for the first extrapolation step we choose a random vector). 
As previously pointed out, we observed experimentally that the extrapolated vectors obtained in this way are all stochastic.
All the numerical experiments are performed on a laptop running Linux with 16Gb memory and CPU Intel\textsuperscript{\textregistered} Core\texttrademark\ i7-4510U with clock 2.00GHz. The code is written and executed in MATLAB R2015a. For the implementation of the simplified topological $\varepsilon$-algorithm we used the public domain Matlab toolbox EPSfun \cite{bmrz2017software}.

\begin{figure}[p!]
	\centering
	$\,$\hfill\includegraphics[width=0.4\columnwidth]{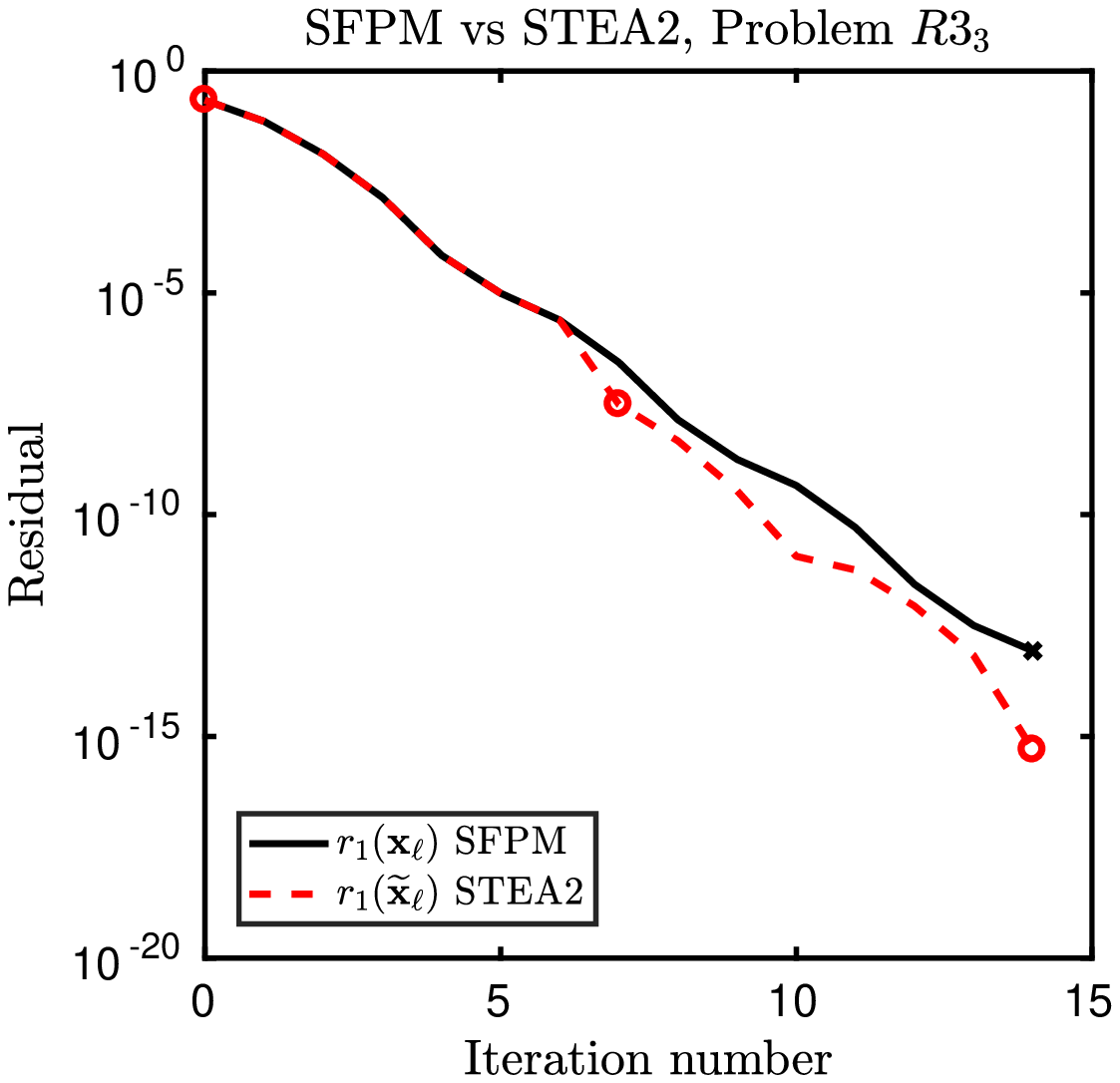} \hfill
	\includegraphics[width=0.4\columnwidth]{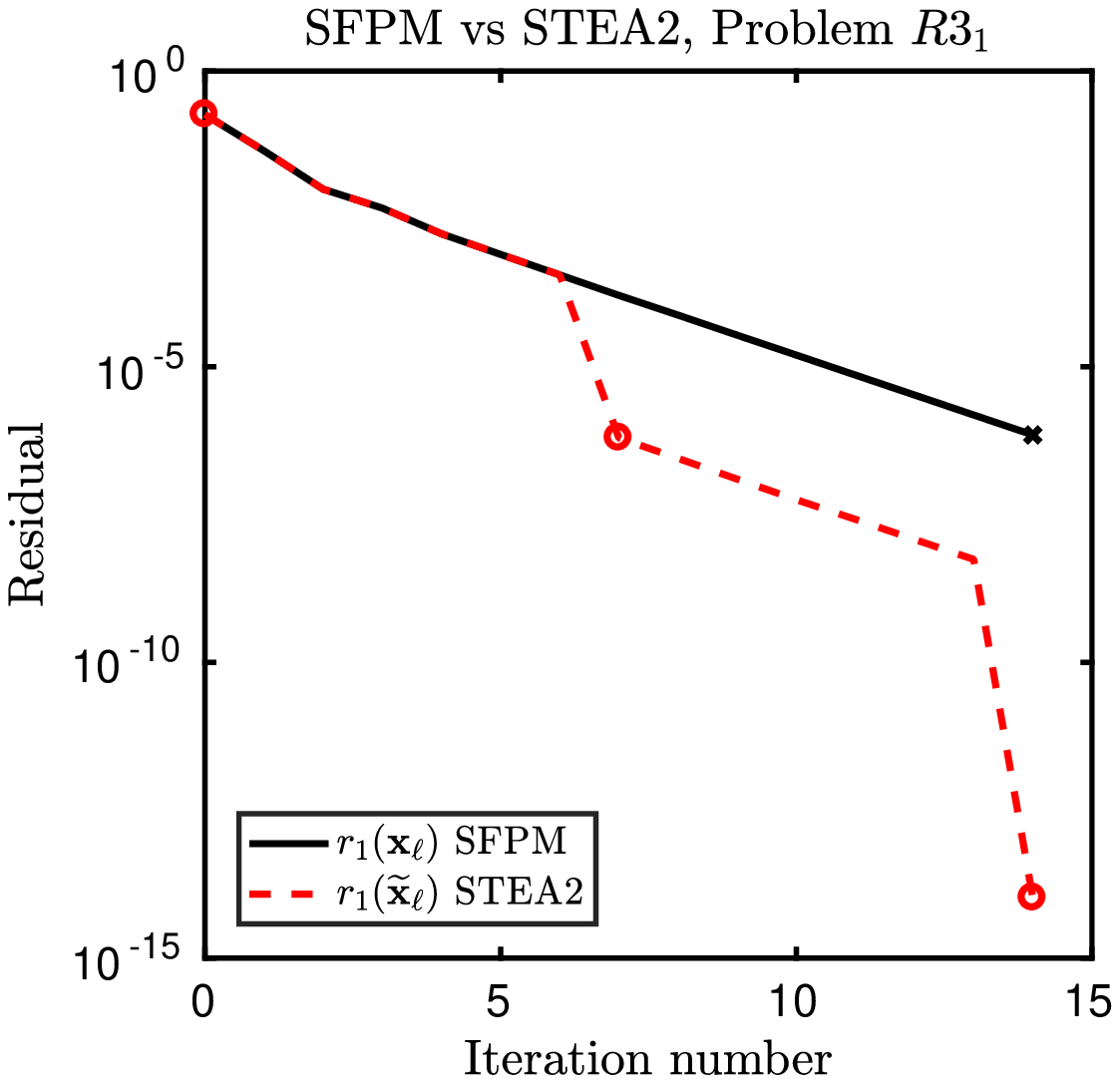} \hfill $\,$\\[1.5em]
	$\,$\hfill\includegraphics[width=0.4\columnwidth]{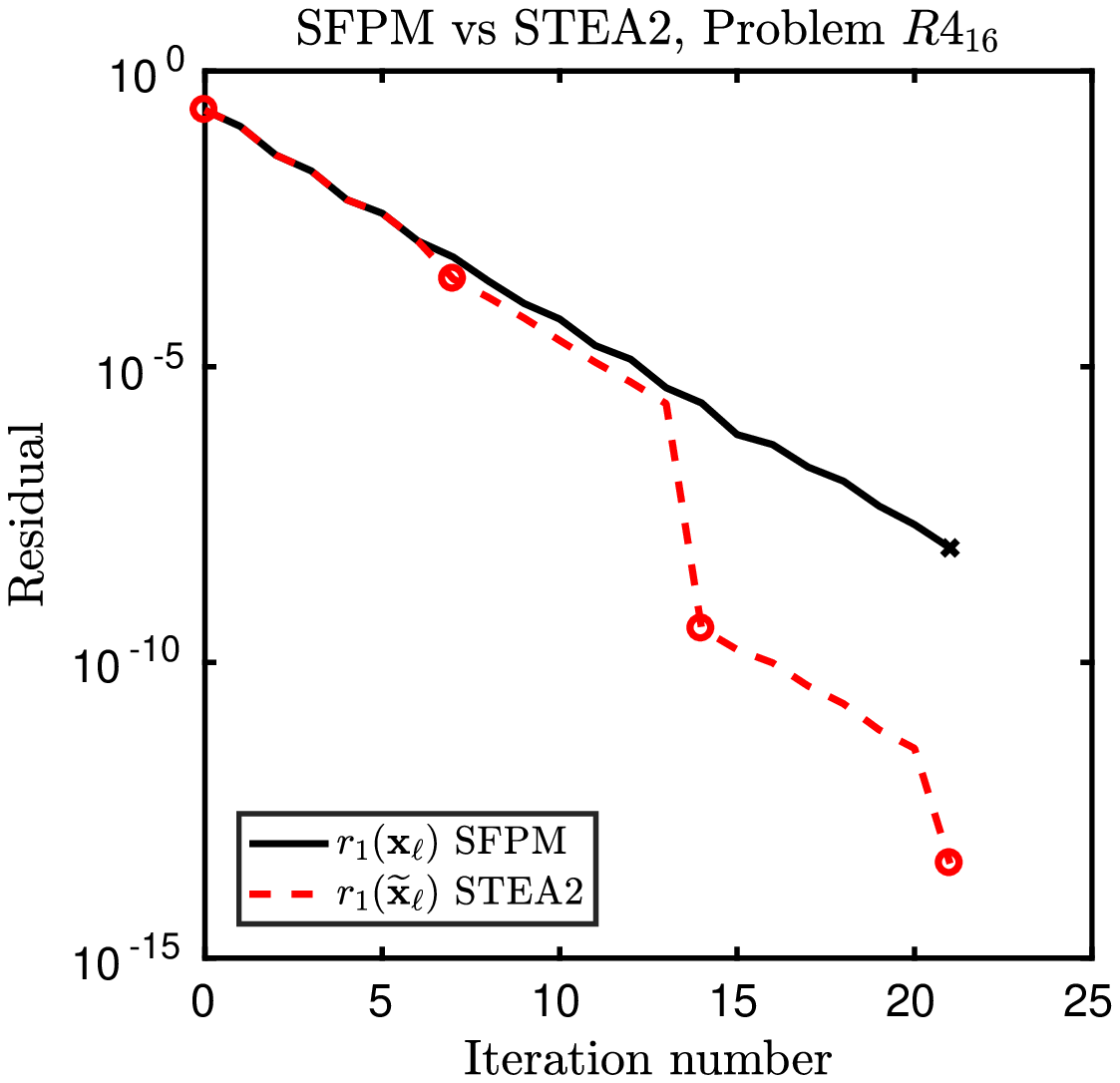} \hfill
	\includegraphics[width=0.4\columnwidth]{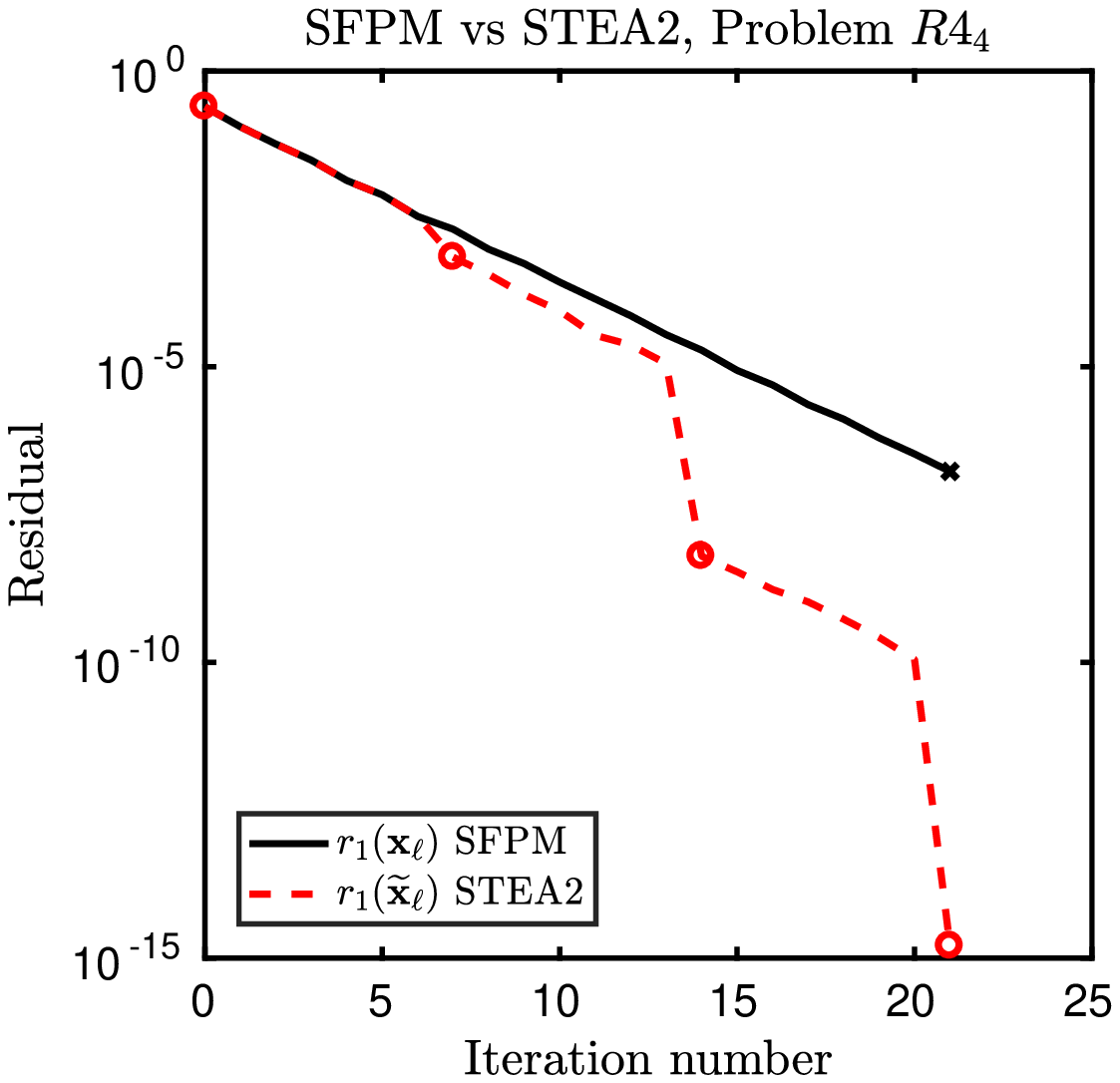} \hfill $\,$\\[1.5em]
	$\,$\hfill\includegraphics[width=0.4\columnwidth]{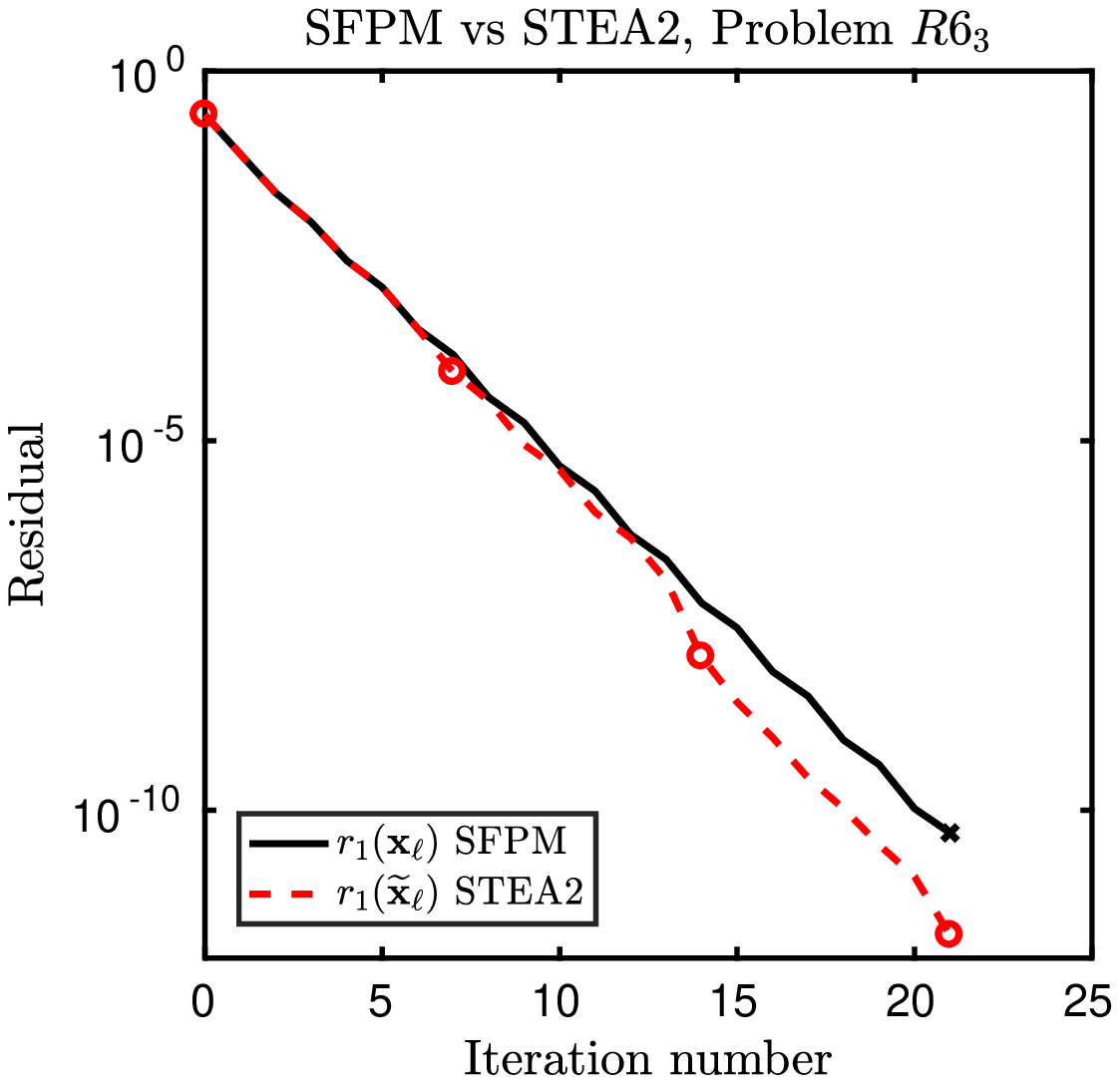}	\hfill
	\includegraphics[width=0.4\columnwidth]{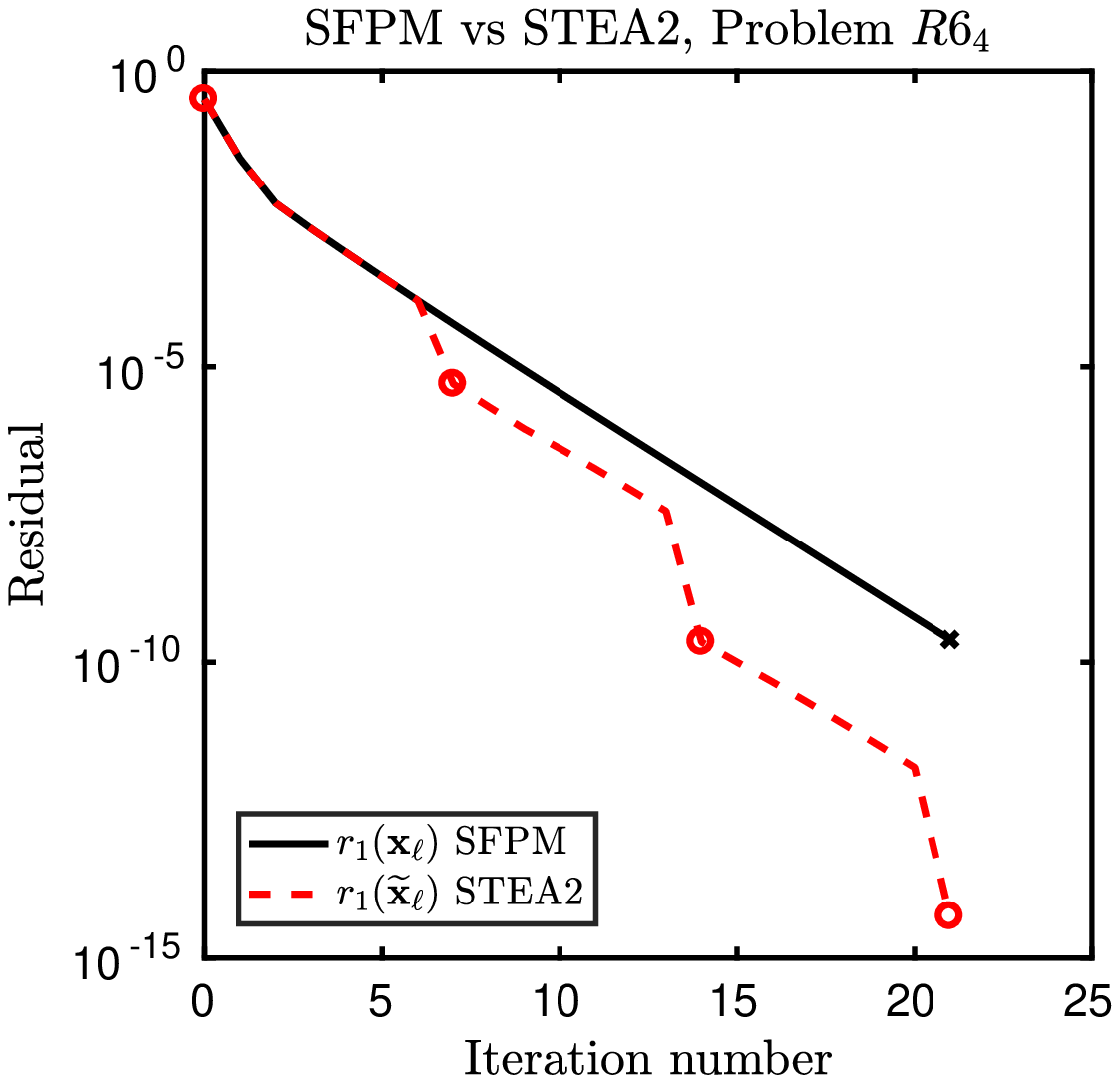}\hfill $\,$
	\caption{$\alpha=0.499\;\gamma=0$ Left column: worst acceleration performance, obtained on problems $R3_3,\;R4_{16},\;R6_3$ (top-bottom). Right column: best acceleration performance, obtained on problems $R3_1,\;R4_4,\;R6_4$ (top-bottom). }\label{fig:experiment1}
\end{figure}

\subsection{Problem Set  1} \label{sec:problemset1} In this section we use the benchmark set of 29 problems used in \cite{gleich2015,meini2017perron} that  consists of order-$3$  $n\times n \times n$ stochastic tensors where $n=3,4,6$.
\begin{figure}[htb!]
	\centering
	$\,$\hfill \includegraphics[width=0.4\columnwidth]{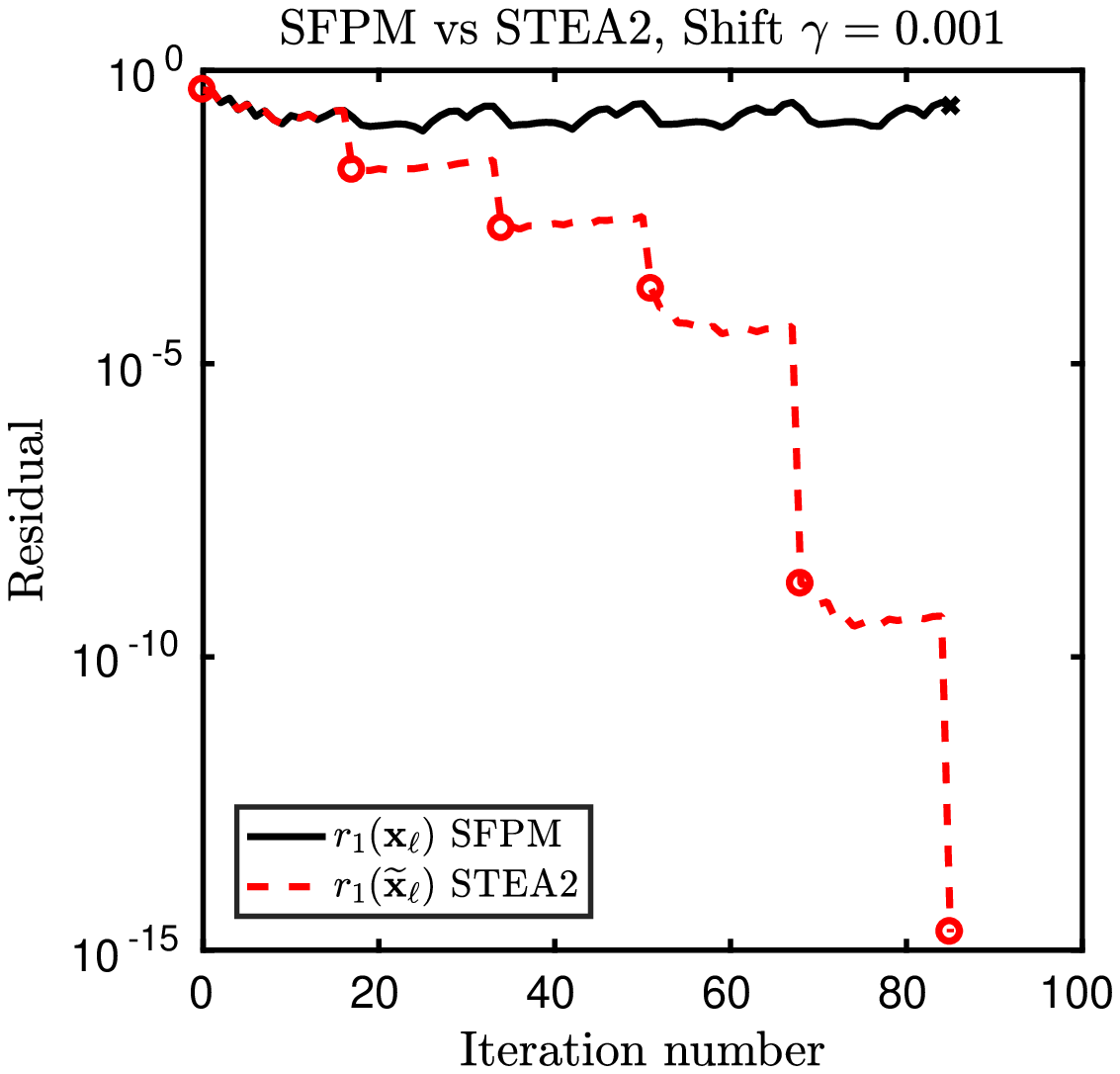} \hfill
	\includegraphics[width=0.4\columnwidth]{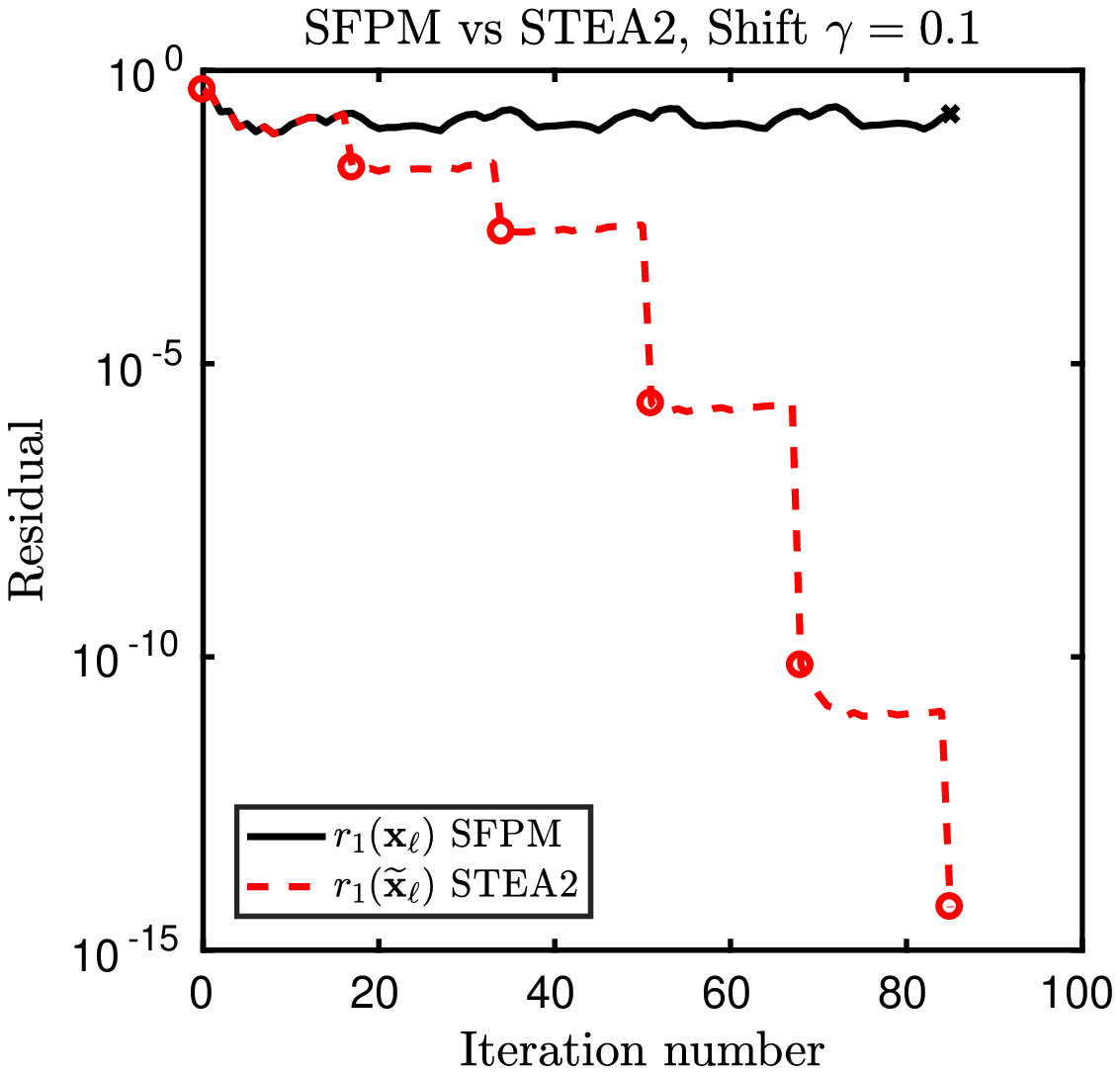}\hfill $\,$\\[1.5em]
	$\,$\hfill\includegraphics[width=0.4\columnwidth]{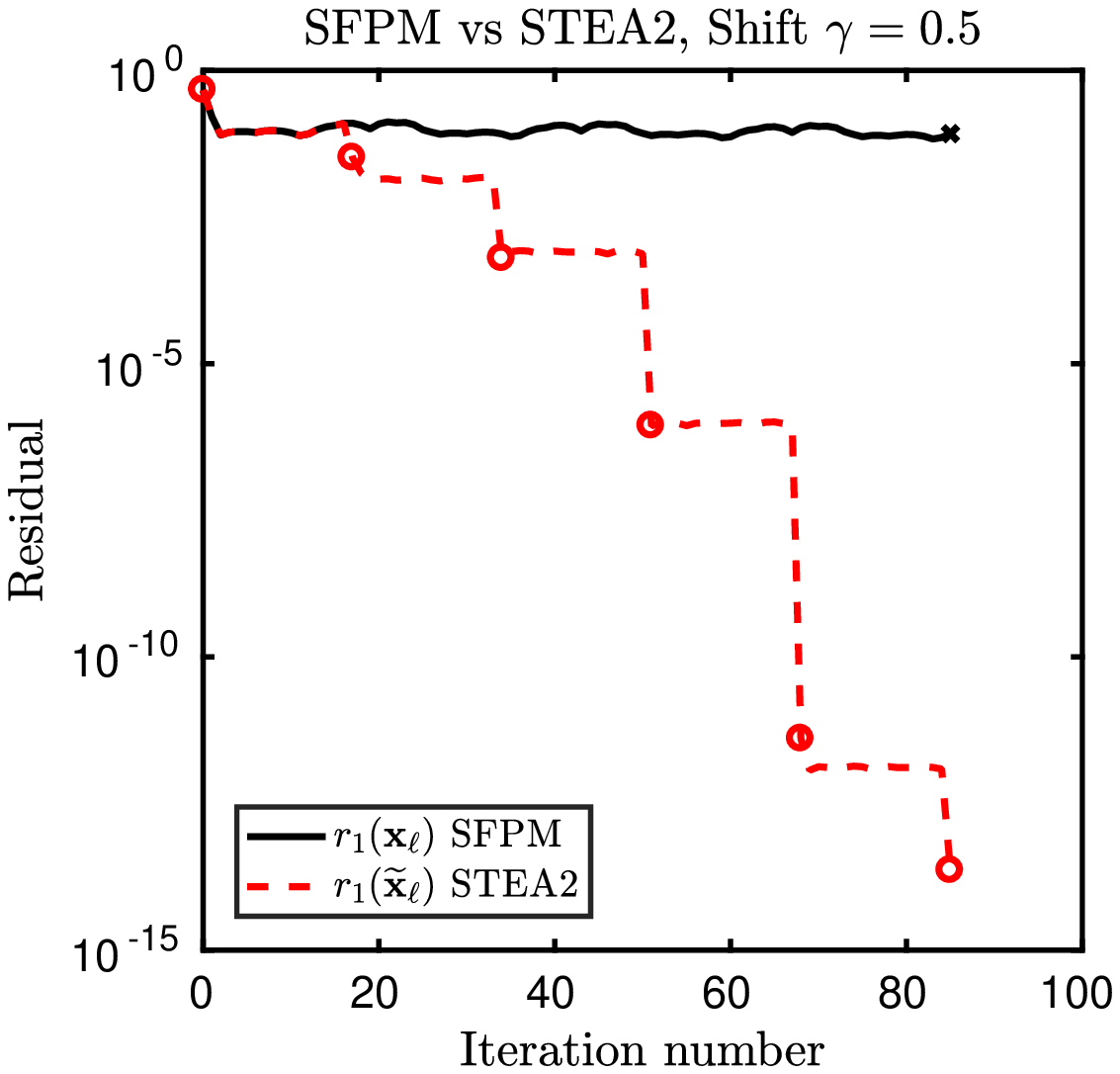}	\hfill
	\includegraphics[width=0.4\columnwidth]{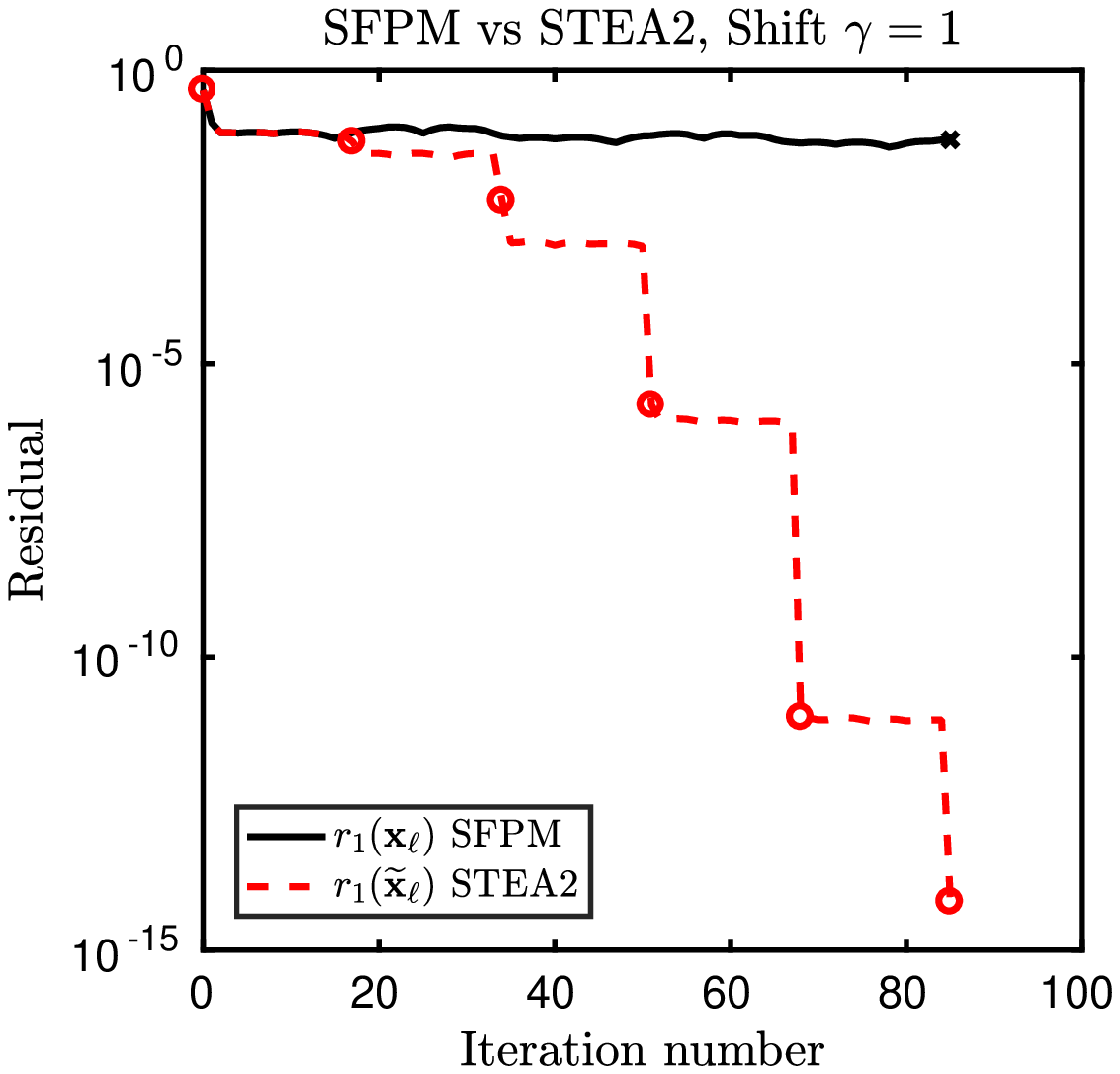}\hfill $\,$
	\caption{$\alpha=0.99$. Performance obtained on problem $R4_{19}$ for $\gamma \in \{0.001, 0.1, 0.5,1\}$.}\label{fig:experiment2}
\end{figure}

\begin{figure}[htb!]
	\centering
	\includegraphics[width=0.45\columnwidth]{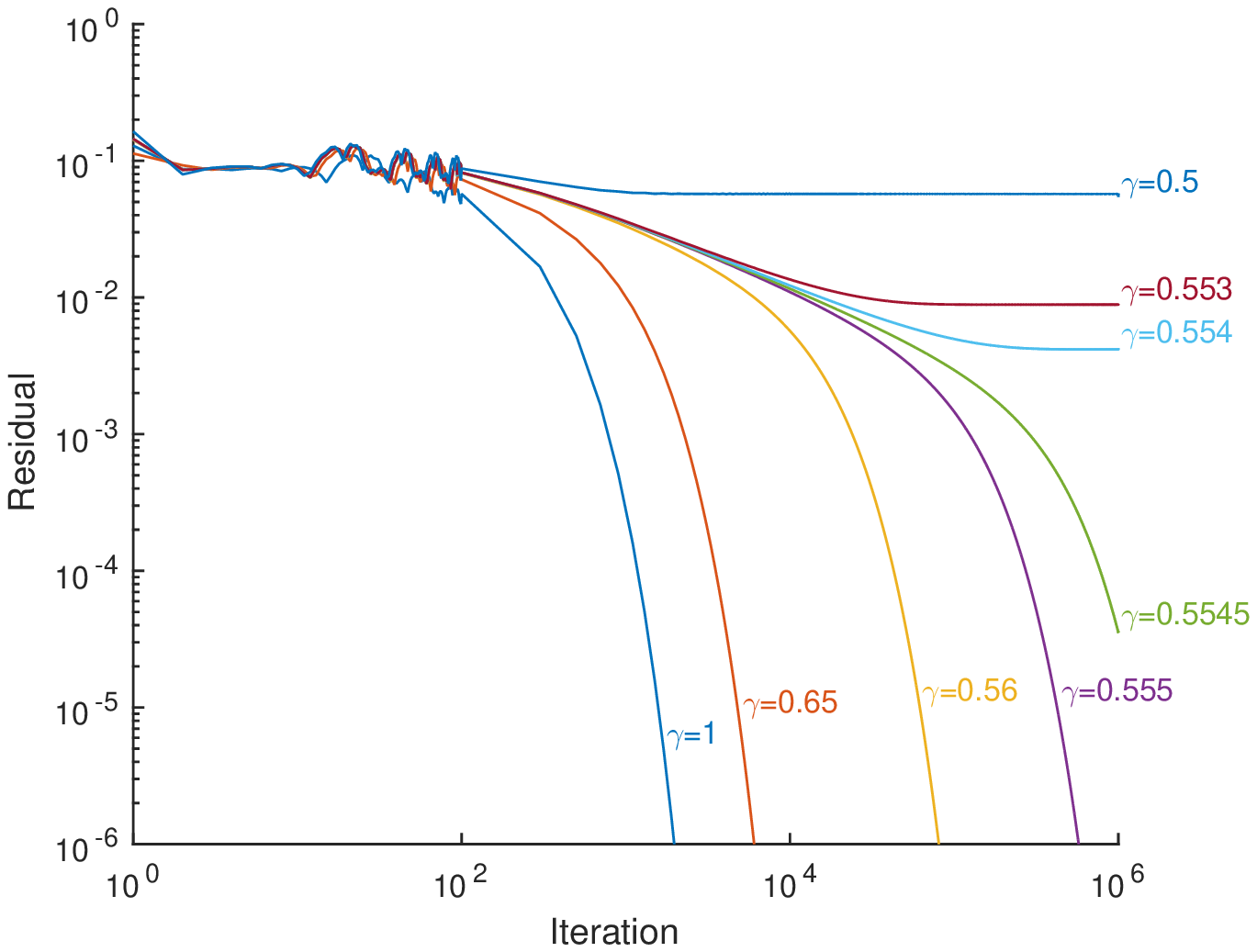}
	\vspace{-0.5cm}
	\caption{Results from \cite{gleich2015} on the test problem $R4_{19}$  for different values of $\gamma$.}\label{fig:plot_from_gleich}
\end{figure}

\subsubsection{Extrapolated shifted power method} \label{sec:extrapolated_SS_HOPM}
We tested Algorithm \ref{alg:restarted_method} on all the problems and, in Figure \ref{fig:experiment1}, we report for every $n=3,4,6$, the best and the worst  speed-up performance (in terms of computed residuals) for the shifted power method coupled with the restarted extrapolation method when $\alpha=0.499$ and $\gamma=0$. The choices of the parameters $2k$ and $\mathsf{cycles}$ in Algorithm \ref{alg:restarted_method} are, respectively, $2k=6$ and $\mathsf{cycles}=2$  or $\mathsf{cycles}=3$.

As it is clear from these results, using extrapolation techniques considerably improves the performance of the shifted power method for the multilinear {P}age{R}ank computation when $\alpha < 1/2$.
{Observe, moreover, that the usage of extrapolation techniques is useful even when $\alpha$ is out of the range of convergence as Figure \ref{fig:experiment2} shows ($2k=16$, $\mathsf{cycles}=5$). In that figure, we report detailed results for the test problem $R4_{19}$ from \cite{gleich2015}. This problem is particularly challenging to solve when $\gamma \leq 0.5$ and, indeed, it is shown in \cite{gleich2015} that for this range of the shifting parameter the power method fails to converge even after $10^6$ iterations. For completeness, we report in Figure \ref{fig:plot_from_gleich} the corresponding results from the original paper. As shown by Figure \ref{fig:experiment2}, the relevance of the extrapolation framework introduced is particularly clear in this case as, on the other hand, the extrapolated iterations converge even for very small values of $\gamma$. }

In order to have a more precise idea of the effectiveness of our approach, in Table \ref{table:solved_problems} we report the number of ``solved'' problems  when $\alpha=0.99$ and $\gamma=1$.
We say that a problem is ``solved'' if we are able to obtain a residual (\ref{eq:residual}) less than $10^{-8}$. Here the choices of $2k$ and $\mathsf{cycles}$ in Algorithm \ref{alg:restarted_method} range, respectively, between $6-20$ and $6-12$.

\begin{table}[hbt]
	\centering
	
	\begin{tabular}{|l|l|}
		\hline
		& Solved Problems \\ \hline
		$n=3$ & 4/5             \\ \hline
		$n=4$ & 15/19           \\ \hline
		$n=6$ & 3/5             \\ \hline
		Total & 22/29           \\ \hline
	\end{tabular}
	\caption{Performance for $\alpha=0.99$ and $\gamma=1$.\label{table:solved_problems}}
	\vspace{-0.5cm}
\end{table}
\subsubsection{Extrapolated inner-outer} \label{subsection:extrapolated_innerouter}
In the previous Section	\ref{sec:extrapolated_SS_HOPM} we showed  that the introduction of the simplified topological $\varepsilon$-algorithm in the shifted fixed-point method produces evident computational benefits for the solution of the multilinear PageRank problems in both the cases when $\alpha$ is inside or  outside  the range of convergence. The inner-outer method, at each iteration, solves a multilinear PageRank problem with $\alpha$ inside the convergence range (see \eqref{eq:in_out_it}); for this reason we strongly recommend the employment of extrapolation techniques in each inner step of the inner-outer method  when solved with the shifted fixed-point method.
Nevertheless, one could wonder whether the extrapolation strategy can speed-up not only the computations for the solution of each inner step of the inner-outer iteration, but also the convergence of the outer sequence. The results presented in this section address this issue.
\begin{figure}[p]
	\centering
	$\,$\hfill\includegraphics[width=0.4\columnwidth]{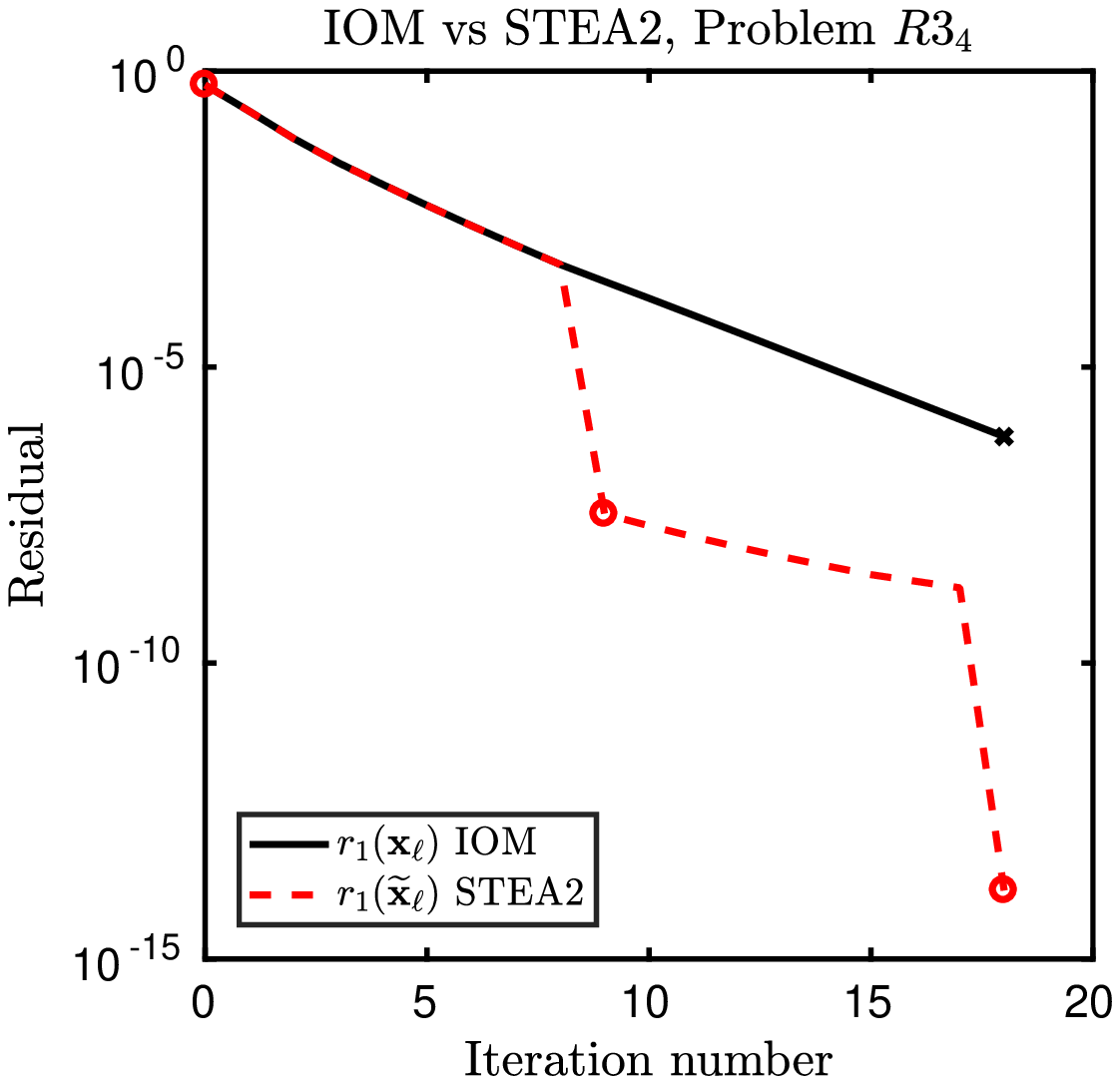} \hfill
	\includegraphics[width=0.4\columnwidth]{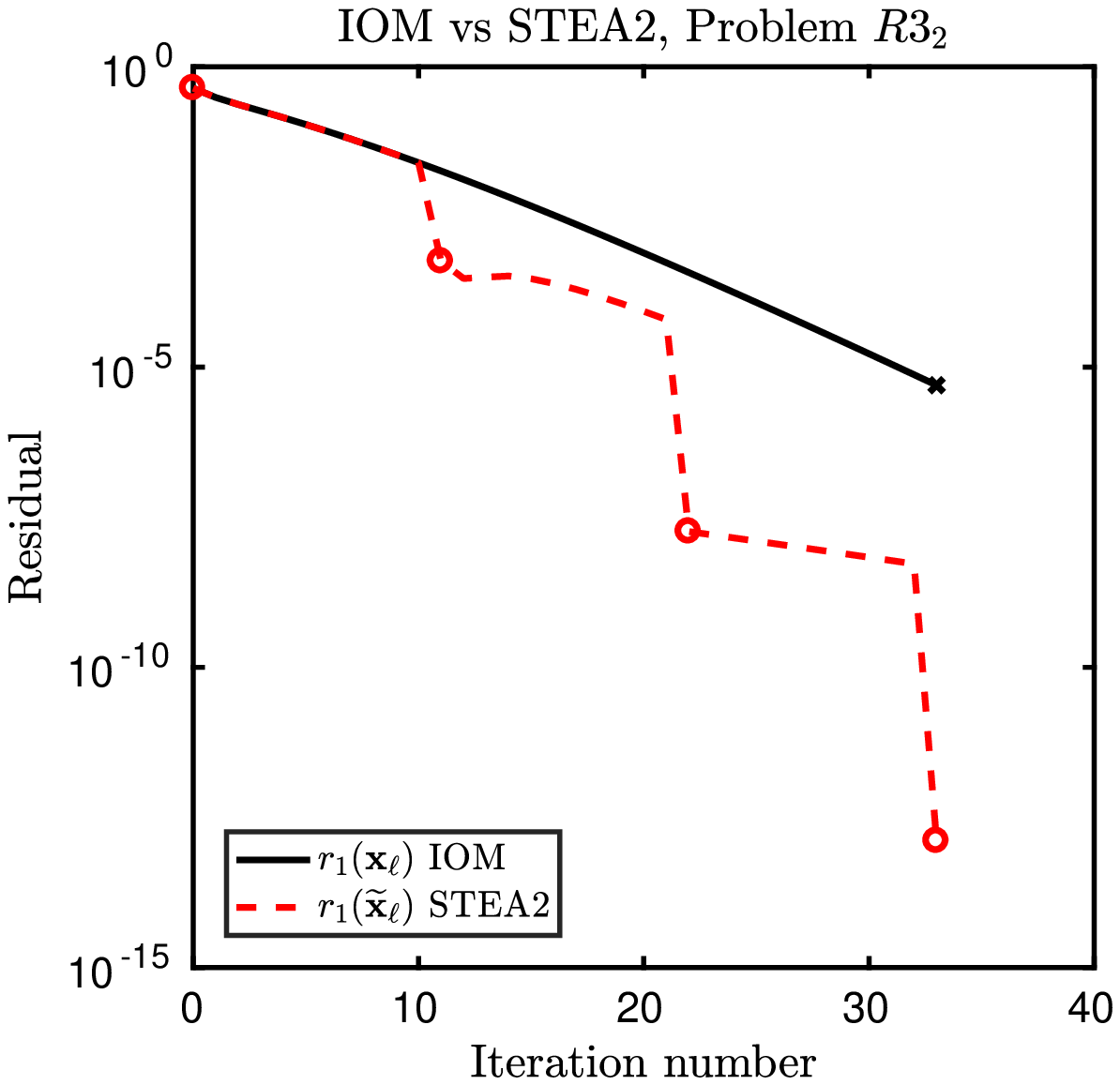} \hfill$\,$\\[1.5em]
	$\,$\hfill\includegraphics[width=0.4\columnwidth]{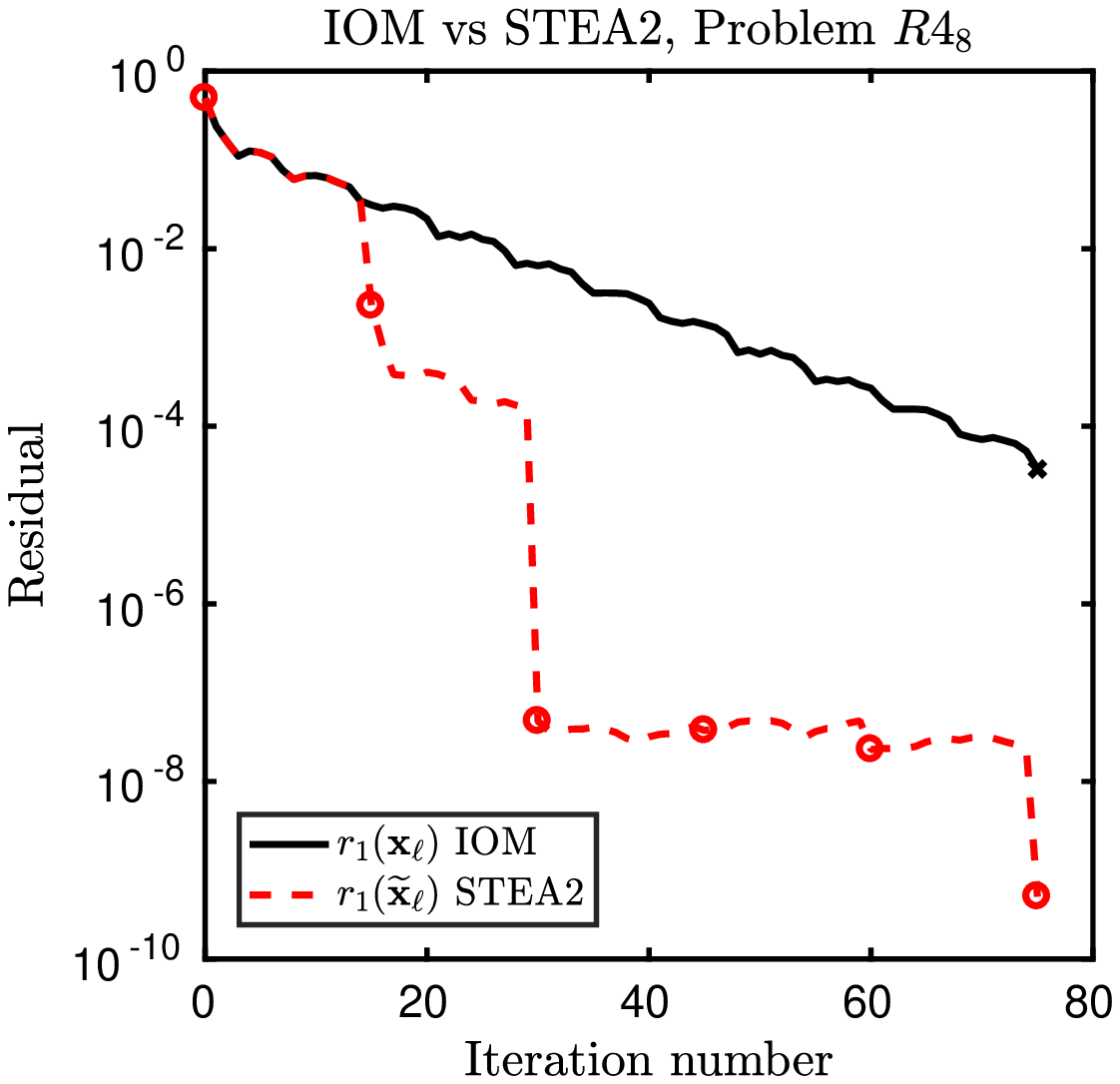}	 \hfill
	\includegraphics[width=0.4\columnwidth]{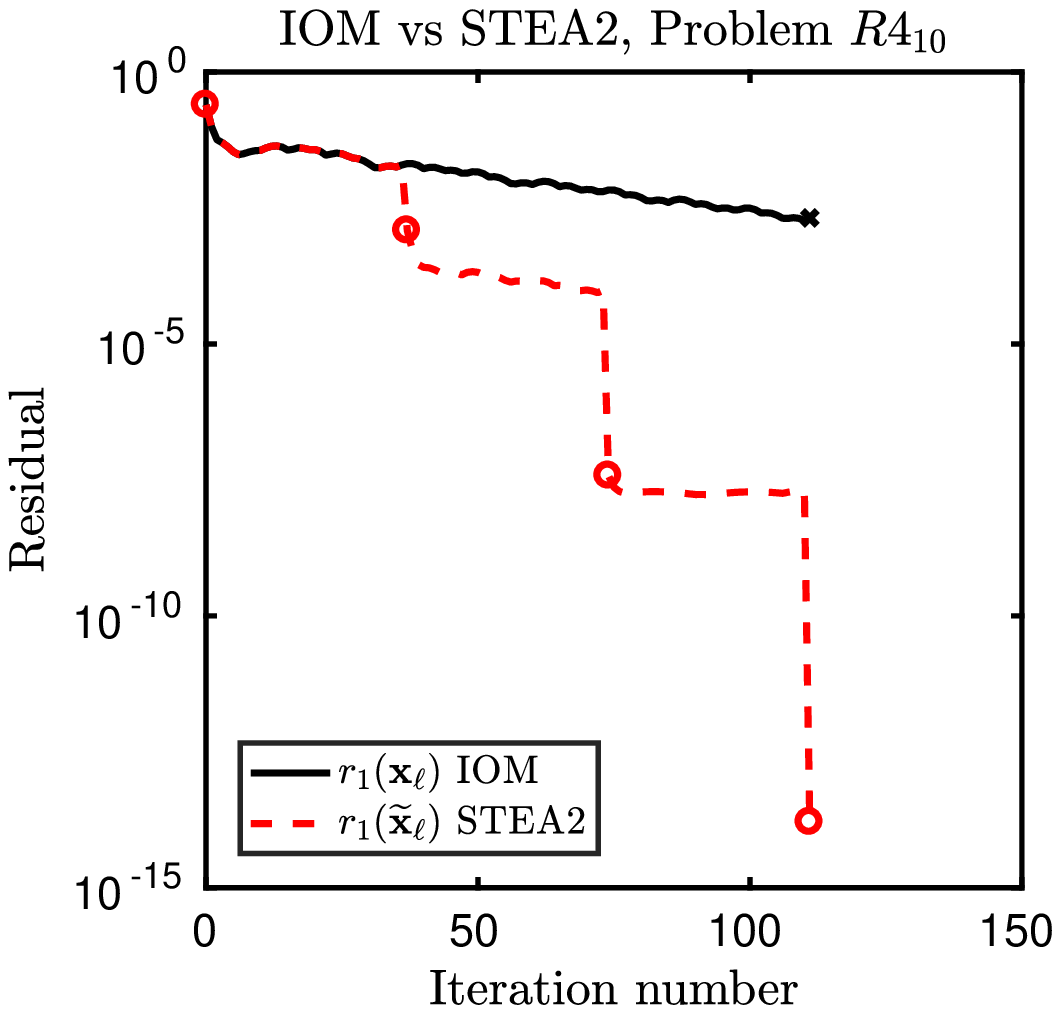}\hfill$\,$ \\[1.5em]
	$\,$\hfill\includegraphics[width=0.4\columnwidth]{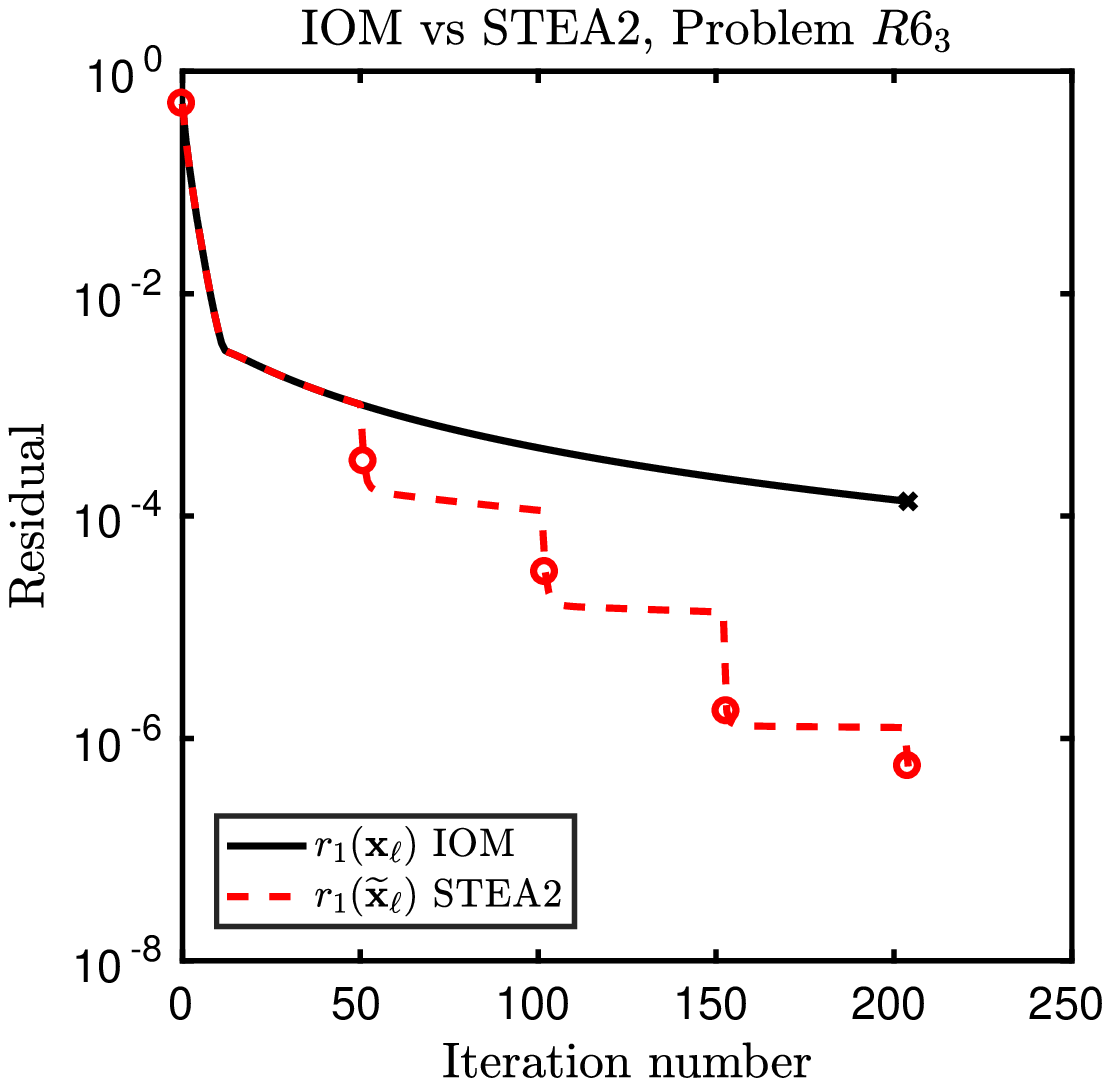}\hfill
	\includegraphics[width=0.4\columnwidth]{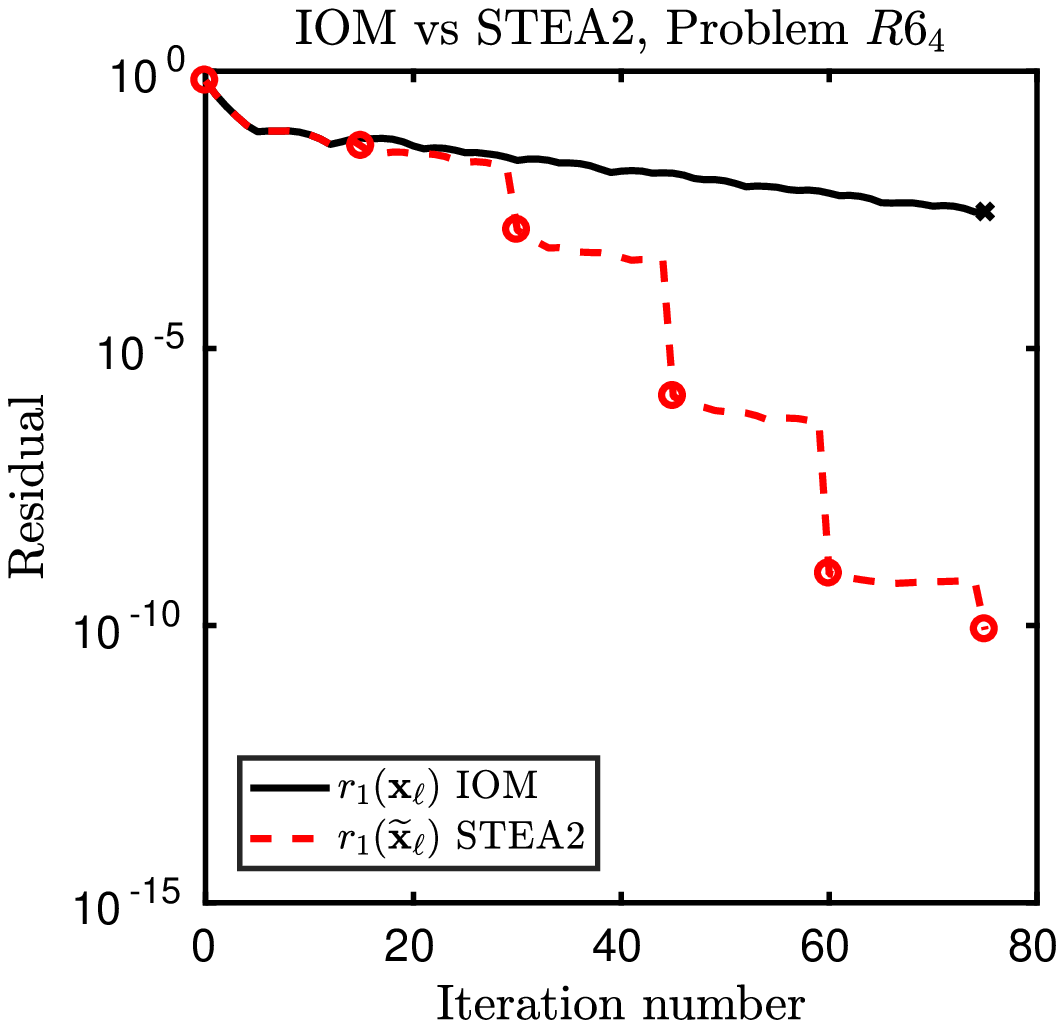} \hfill$\,$
	\caption{$\alpha=0.99.$ Left column: worst acceleration performance, obtained on problems $R3_4,\;R4_{8},\;R6_3$ (top-bottom). Right column: best acceleration performance, obtained on problems $R3_2,\;R4_{10},\;R6_4$ (top-bottom). $R6_3$ not solved. }\label{fig:experiment_inout}
\end{figure}

In Figure \ref{fig:experiment_inout}, we report best and worst performance of the inner-outer iteration \eqref{eq:in_out_it} with $\alpha=0.99$ when coupled with the restarted extrapolation method. We successfully solve  all the problems in our set   except for problem $R6_3$. The choices of the parameters $2k$ and $\mathsf{cycles}$ in Algorithm \ref{alg:restarted_method} range between $10-50$ and $3-9$, respectively.

\subsection{Problem Set 2} \label{sec:problemset2}
In order to  generate stochastic order-$m$ tensors of any desired {dimension} $n$ and produce statistics on a large number of datasets with different sizes, we consider the following procedure which uses the random graph generator CONTEST \cite{taylor2009contest}:

\begin{algorithm}
	\caption{Stochastic Tensor generator} \label{alg:random_graph_generator}
\begin{algorithmic}
	\State $n$ (size of the tensor), $m$ (modes of the tensor), \\ ${\bf g}=\{\mathtt{smallw}(n), \mathtt{gilbert}(n), \mathtt{erdrey}(n), \mathtt{pref}(n), \mathtt{geo}(n), \mathtt{lockandkey}(n), \mathtt{rank1}(n)\}$
	\For{$i=1,\dots, n^{m-2}$}
	\State Choose randomly an element ${\bf g}_r$ of ${\bf g}$
	\State	Set ${P}_{: \, , \,  n(i-1)+1:in }={\bf g}_r$
	\State	Transform  ${P}_{:\, ,\,  n(i-1)+1:in }$ into a stochastic matrix by rescaling the columns
	\EndFor
	\end{algorithmic}
\textbf{Output}: $\mathcal{P}$ is the $m$-th order stochastic tensor whose stochastic unfolding is $P$.
\end{algorithm}

\begin{figure}[h]
	\vspace{0.1cm}
	\centering
	{\includegraphics[width=0.7\linewidth]{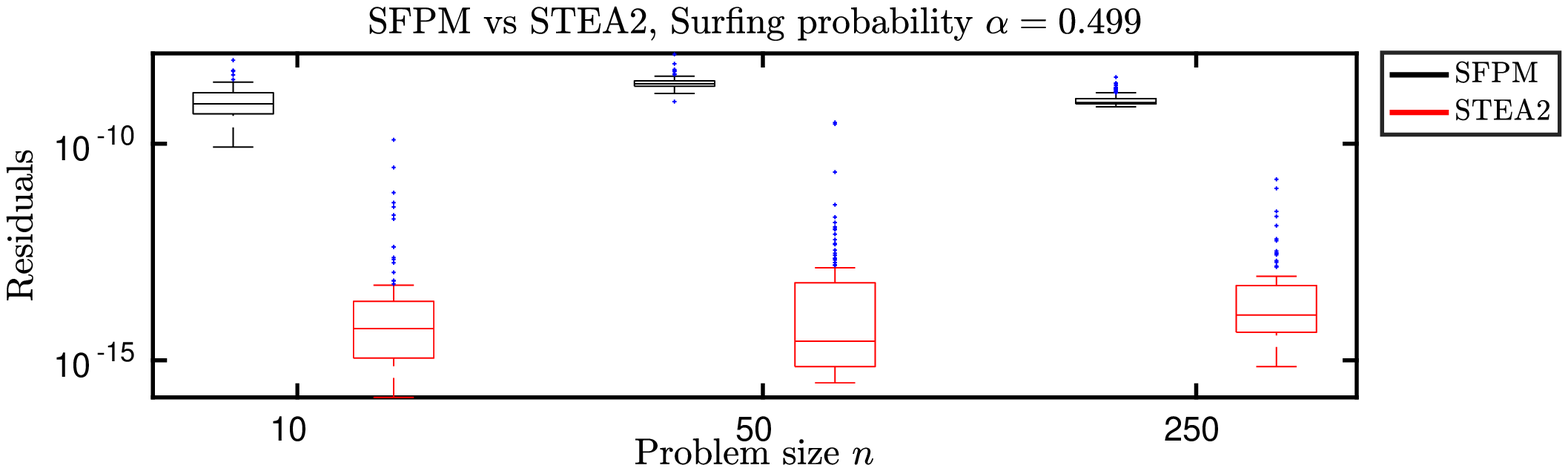}}	
	\caption{Median and quartiles of SFPM and STEA2 after $(2k+1) \times \mathsf{cycles}$ iterations on $100$ random tensors obtained by Algorithm \ref{alg:random_graph_generator}, with $\alpha=0.499$, $\gamma=0$, $2k=10,\;\mathsf{cycles}=2$, $n=10,50,250$ and $m=2$.}\label{fig:experiment1_rand_tens}
	\vspace{0.3cm}
	\centering
	{\includegraphics[width=0.7\columnwidth]{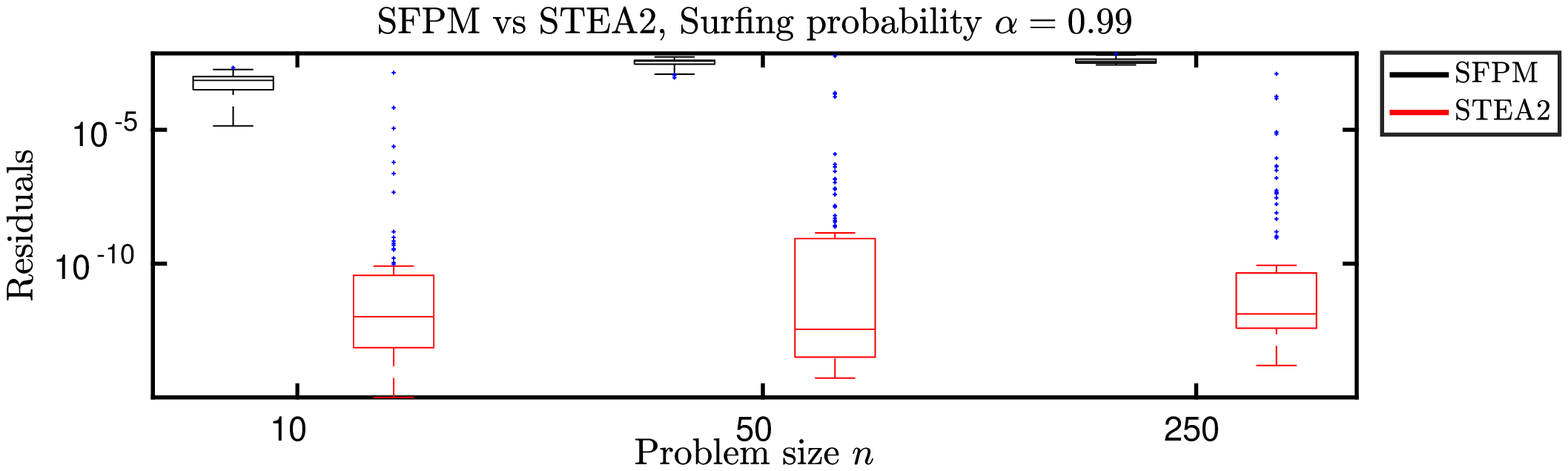}}	
	\caption{Median and quartiles of SFPM and STEA2 after $(2k+1) \times \mathsf{cycles}$ iterations on $100$ random tensors obtained by Algorithm \ref{alg:random_graph_generator}, with  $\alpha=0.99$, $\gamma=1$, $2k=28,\;\mathsf{cycles}=4$, $n=10,50,250$ and $m=2$.}\label{fig:experiment2_rand_tens}
\end{figure}

\begin{figure}[t]
	\vspace{0.3cm}
	\centering
	{\includegraphics[width=0.7\columnwidth]{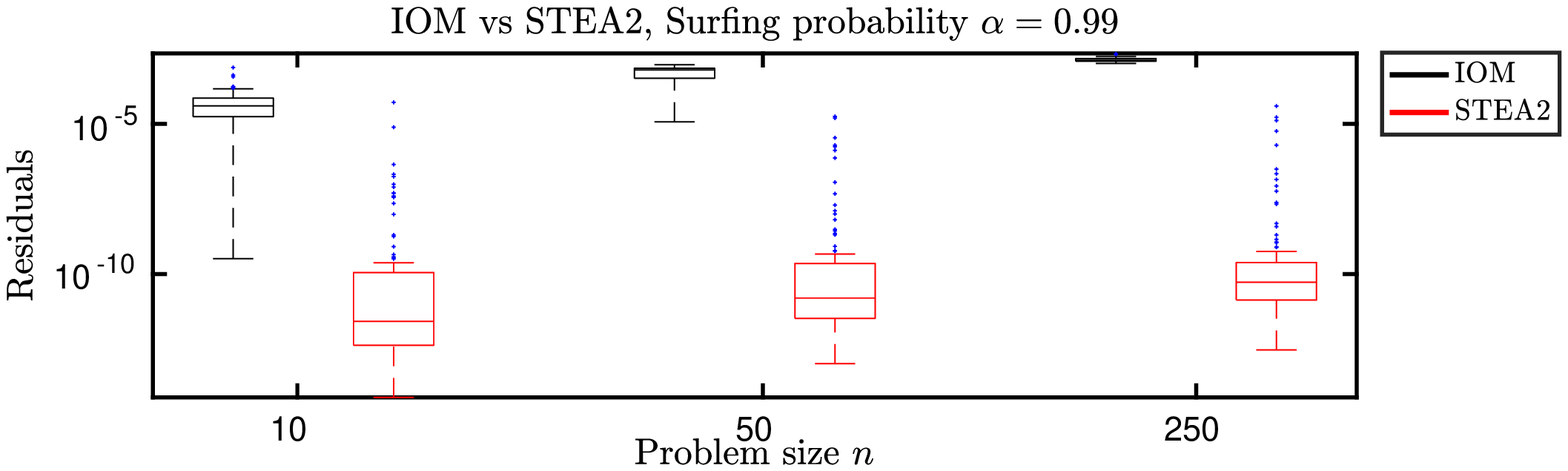}}	
	\caption{Median and quartiles of IOM and STEA2 after $(2k+1) \times \mathsf{cycles}$ iterations on $100$ random tensors obtained by Algorithm \ref{alg:random_graph_generator}, with $\alpha=0.99$, $2k=32,\;\mathsf{cycles}=4$, $n=10,50,250$ and $m=2$.}\label{fig:experiment3_rand_tens}
\end{figure}

{In the above procedure, the symbols  $$\mathtt{smallw}(n), \, \mathtt{gilbert}(n),\,  \mathtt{erdrey}(n),\,  \mathtt{pref}(n),\,  \mathtt{geo}(n),\,  \mathtt{lockandkey}(n),$$  
	denote random adjacency matrices  of dimension $n$ generated according to different random graph models described in the  CONTEST package, whereas  $\mathtt{rank1}(n)$ is the rank one matrix of all ones introduced in order to endow the tensor with a ``strong directionality'' feature which makes problems harder to be solved, as discussed in \cite{gleich2015}.
	In this way it is possible to provide statistics about the acceleration performance of the proposed techniques and investigate the dependence on the parameter $2k$ in Algorithm \ref{alg:restarted_method}  for increasing values of $n$ and $m$. As the obtained numerical results confirm, also in this case the use of extrapolation techniques significantly improves the performance of the shifted fixed-point method and of the inner-outer method. The  analysis carried out here highlights a very weak dependence  between  the number of vectors used to perform the extrapolation, i.e., the choice of $2k$ in Algorithm \ref{alg:restarted_method}, and the dimension of the problems. This feature demonstrates  that the proposed technique is particularly effective for solving problems of large scale. This is further supported by the analysis on large scale real-world  data carried out in the next Section \ref{sec:real_world_problems}.

\begin{figure}[t]
	\vspace{0.1cm}
	\centering
	{\includegraphics[width=0.7\linewidth]{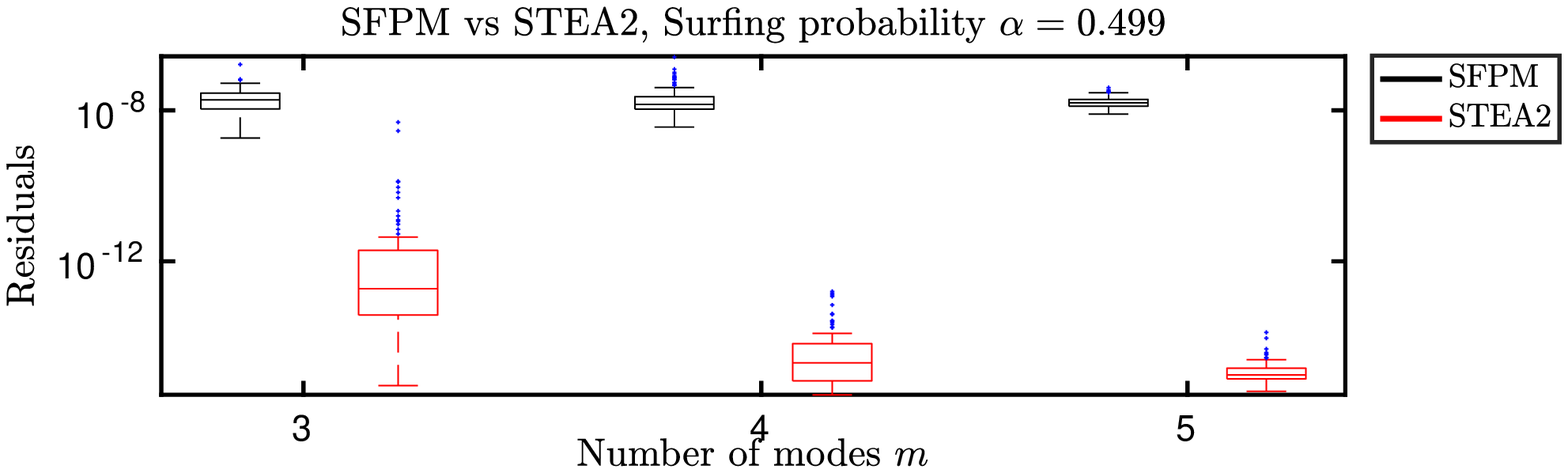}}	
	\caption{Median and quartiles of SFPM and STEA2 after $(2k+1) \times \mathsf{cycles}$ iteration on $100$ random tensors obtained by Algorithm \ref{alg:random_graph_generator}, with $\alpha=0.499$, $\gamma=0$, $2k=8,\;\mathsf{cycles}=2$, $n=10$ and $m=3,4,5$.}\label{fig:experiment4_rand_tens}
	\vspace{0.3cm}
	\centering
	{\includegraphics[width=0.7\columnwidth]{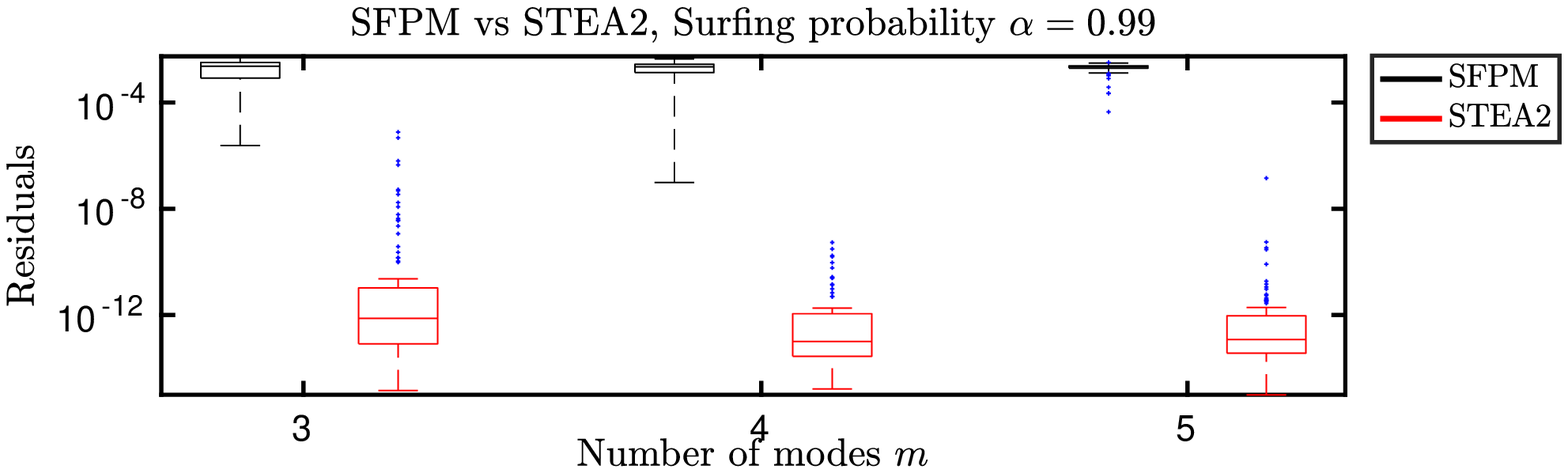}}	
	\caption{Median and quartiles of SFPM and STEA2 after $(2k+1) \times \mathsf{cycles}$ iteration on $100$ random tensors obtained by Algorithm \ref{alg:random_graph_generator}, with  $\alpha=0.99$, $\gamma=1$, $2k=22,\;\mathsf{cycles}=4$, $n=10$ and $m=3,4,5$.}\label{fig:experiment5_rand_tens}

\end{figure}	

	{In  Figure \ref{fig:experiment1_rand_tens} we show results, via Matlab's boxplots, on the acceleration performance of the shifted power method for the following choices of the parameters: $\alpha=0.499$, $\gamma=0$, $2k=10$ $\mathsf{cycles}=2$, $n=10,50,250$ and $m=2$. {In particular, we show  median and quartiles for the residual \eqref{eq:residual} after $(2k+1)\times \mathsf{cycles}$ iterations on 100 random example tests for both the shifted power method (in black) and the extrapolated sequence of Algorithm \ref{alg:restarted_method} (in red).  }
		In Figure \ref{fig:experiment2_rand_tens}, we show  analogous  results for the following choices of the parameters: $\alpha=0.99$, $\gamma=1$, $2k=28,\;\mathsf{cycles}=4$, $n=10,50,250$ and $m=2$.
		Whereas, in  Figure \ref{fig:experiment3_rand_tens}, we report the results of the acceleration performance with respect to the inner-outer method for the following choices of the parameters: $\alpha=0.99$, $2k=32,\;\mathsf{cycles}=4$, $n=10,50,250$ and $m=2$.

		Finally, Figures \ref{fig:experiment4_rand_tens} and \ref{fig:experiment5_rand_tens}, show how the acceleration performance of the shifted fixed-point method varies when the number of modes of the tensor increases. The choices of the parameters are the following: $n=10$, $\alpha=0.499$, $\gamma=0$, $2k=8$, $\mathsf{cycles}=2$ or $\alpha=0.99$, $\gamma=1$, $2k=22$, $\mathsf{cycles}=4$ and $m=3,4,5$.

		Figures \ref{fig:experiment1_rand_tens}-\ref{fig:experiment5_rand_tens} clearly show that the number of problems for which the proposed extrapolated algorithm provides major acceleration performance does not significantly change when $k$ and $\mathsf{cycles}$ in Algorithm \ref{alg:restarted_method} are fixed and either $n$ or $m$ increase. The independence of the parameter $k$ from the dimension $n$ and the number of modes $m$ is of utmost importance as this allows us to efficiently apply the proposed algorithm to large scale problems. This is also confirmed by real-world experiments of  Section \ref{sec:real_world_problems}.

	\subsection{Real-world datasets} \label{sec:real_world_problems}
	{In this section we report experimental results of the extrapolated  shifted fixed-point method when applied to a number of real world datasets  borrowed  from
		\cite{davis2011university,grindrod2016comparison}. Precisely, given an undirected  graph $G$ with $n$ nodes, we generate the third-order symmetric tensor $\mathcal C$ defined as follows

		\begin{equation*}
		\mathcal{C}_{i,j,k}= \begin{cases}
		1, \hbox{ if there is a three-cycle between nodes } i,j,k \\
		0, \hbox{ otherwise.}
		\end{cases}\, 
		\end{equation*}
		
		\begin{table}[hbt!]
		\centering  \fontsize{8}{5}\selectfont 
		\begin{tabular}{|lllll l lllll|}
			\hline
			Problem name & Source  &  Size & nnz(A) & nnz($\mathcal{C}$) & \hspace{3em} & Problem name & Source  &  Size & nnz(A) & nnz($\mathcal{C}$)\\
			\hline
			Bristol$^*$ & \cite{grindrod2016comparison} &  2892      & 9076 & 7722 & & minnesota& \cite{davis2011university} & 2642 &   6606   &  318    \\
			Cardiff$^*$& \cite{grindrod2016comparison}  &    2685   &  	8888 &  15186 & & NotreDame\_yeast& \cite{davis2011university} & 2114 &   4480   &  2075    \\
			Edinburgh$^*$ & \cite{grindrod2016comparison} &   1645 &   4292   &  1404   & & p2p-Gnutella04$^*$& \cite{davis2011university} & 10879   &    79988  &  5604    \\ 
			Glasgow$^*$ & \cite{grindrod2016comparison} &   1802   &  4568  &  1458 & & USpwerGrid& \cite{davis2011university} & 4941  &   13188     &   3906  \\
			Nottingham$^*$& \cite{grindrod2016comparison}  & 2066   &    6310 & 6162   & & wiki-Vote$^*$& \cite{davis2011university} &8297 &   201524     &  3650334  \\
			Erdos02 & \cite{davis2011university} &    6927    & 16944  & 14430 & & wing\_nodal& \cite{davis2011university} & 10937   &  150976      &  803082  \\
			EX6 & \cite{davis2011university} &    6545    & 295680  & 1774080 & & yeast& \cite{davis2011university} &2361 &     13828    &  35965  \\
			\hline
		\end{tabular}
		\caption{Datasets' information \label{table:synthetic_data}}
	\end{table}
	\begin{table}[t!]
		\centering \fontsize{10}{11}\selectfont 
		\begin{tabular}{|l|l|lll|lll|}
			\multicolumn{2}{l}{}                      & \multicolumn{3}{c}{\textbf{STEA2}} & \multicolumn{3}{c}{\textbf{SFPM}} \\ \hline
			& $\beta$ & Time(s)   & Iter       & Res  & Time(s)  & Iter      & Res \\
			\hline
			\multirow{3}{*}{Bristol}   & 0.1     &   \textbf{4.28} & \textbf{93} &   8.64e-10 &   \textbf{8.13} & \textbf{199} &   9.42e-09     \\
			& 0.3     &   1.70 & 38 &   2.80e-09 &   3.27 & 79 &   9.07e-09    \\
			& 0.6     &   1.21 & 27 &   1.09e-09 &   1.80 & 41 &   9.65e-09    \\ \hline
			\multirow{3}{*}{Cardiff}        & 0.1    &   \textbf{3.91} & \textbf{93} &   6.35e-09 &   \textbf{7.47} & \textbf{206} &   9.91e-09   \\
			& 0.3     &   1.48 & 38 &   5.83e-09 &   2.96 & 81 &   8.89e-09   \\
			& 0.6    &   1.06 & 27 &   3.45e-10 &   1.54 & 42 &   7.51e-09   \\ \hline
			\multirow{3}{*}{Edinburgh}      & 0.1      &   \textbf{1.67} & \textbf{93} &   2.84e-09 &   \textbf{2.75} & \textbf{205} &   9.62e-09
			\\
			& 0.3     &   0.40 & 38 &   2.48e-09 &   0.61 & 81 &   8.75e-09    \\
			& 0.6     &   0.42 & 27 &   1.56e-09 &   0.56 & 42 &   8.27e-09    \\ \hline
			\multirow{3}{*}{Glasgow}        & 0.1    &   3.23 & 155 &   6.64e-09 &   3.31 & 206 &   9.46e-09  \\
			& 0.3     &   0.72 & 57 &   5.45e-10 &   0.78 & 81 &   9.08e-09   \\
			& 0.6     &   0.46 & 27 &   7.58e-10 &   0.71 & 42 &   8.82e-09   \\ \hline
			\multirow{3}{*}{Nottingham}     & 0.1    &   4.11 & 155 &   3.86e-09 &   4.34 & 206 &   9.42e-09  \\
			& 0.3     &   0.98 & 38 &   1.93e-09 &   1.85 & 81 &   8.43e-09    \\
			& 0.6     &   0.69 & 27 &   1.17e-10 &   0.90 & 42 &   7.53e-09    \\ \hline
			\multirow{3}{*}{Erdos02}        & 0.1    &  29.61 & 124 &   8.02e-10 &  37.64 & 165 &   9.82e-09  \\
			& 0.3     &  13.82 & 57 &   1.31e-09 &  17.12 & 73 &   8.70e-09    \\
			& 0.6     &   6.54 & 27 &   4.50e-09 &   9.00 & 39 &   9.16e-09   \\ \hline
			\multirow{3}{*}{EX6}        & 0.1    &   \textbf{6.95 }& \textbf{31} &   2.40e-09 & \textbf{13.73} & \textbf{63} &   8.84e-09  \\
			& 0.3     &  10.31 & 38 &   2.90e-11 &  11.40 & 46 &   9.11e-09   \\
			& 0.6     &   6.28 & 27 &   1.21e-10 &   7.93 & 32 &   9.38e-09    \\ \hline
			\multirow{3}{*}{minnesota}      & 0.1     &   4.91 & 124 &   1.71e-09 &   7.56 & 221 &   9.47e-09    \\
			& 0.3     &   1.41 & 38 &   8.43e-09 &   2.87 & 84 &   8.51e-09   \\
			& 0.6     &   0.96 & 27 &   3.73e-10 &   1.48 & 43 &   7.37e-09   \\ \hline
			
			\multirow{3}{*}{ND\_yeast}      & 0.1     &   2.70 & 155 &   5.56e-09 &   2.95 & 229 &   9.52e-09   \\
			& 0.3     &   1.47 & 57 &   2.52e-11 &   1.82 & 85 &   9.71e-09   \\
			& 0.6     &   0.45 & 27 &   4.21e-09 &   0.55 & 43 &   8.51e-09  \\ \hline
			
			\multirow{3}{*}{p2p-Gnutella04} & 0.1     &  \textbf{35.91} & \textbf{62} &   5.18e-09 &  \textbf{71.63} & \textbf{127} &   9.96e-09   \\
			& 0.3     &  22.44 & 38 &   2.32e-09 &  31.28 & 55 &   9.05e-09    \\
			& 0.6     &  15.82 & 27 &   7.84e-10 &  20.02 & 35 &   7.75e-09   \\ \hline
			\multirow{3}{*}{USpowerGrid} & 0.1     &  19.36 & 155 &   3.20e-09 &  25.94 & 210 &   9.65e-09   \\
			& 0.3     &   4.78 & 38 &   1.96e-09 &   9.73 & 82 &   8.67e-09    \\
			& 0.6    &   3.34 & 27 &   5.58e-10 &   5.01 & 42 &   9.11e-09    \\ \hline
			
			\multirow{3}{*}{wiki-Vote}      & 0.1     &   \textbf{32.32} & \textbf{93} &   1.39e-10 &  \textbf{57.22} & \textbf{170} &   9.72e-09   \\
			& 0.3     &  13.10 & 38 &   1.37e-09 &  23.41 & 65 &   8.61e-09   \\
			& 0.6     &   9.22 & 27 &   9.83e-10 &  11.95 & 36 &   8.85e-09    \\ \hline

			\multirow{3}{*}{wing\_nodal}    & 0.1     &  \textbf{54.76} & \textbf{93} &   6.15e-10 & \textbf{106.82} & \textbf{187} &   9.43e-09  \\
			& 0.3     &  22.38 & 38 &   7.94e-09 &  42.43 & 74 &   9.23e-09   \\
			& 0.6     &  12.09 & 18 &   3.22e-09 &  22.09 & 39 &   8.06e-09    \\ \hline
			\multirow{3}{*}{yeast}          & 0.1    &   \textbf{3.06} & \textbf{93} &   1.20e-09 &   \textbf{5.81} & \textbf{212} &   9.97e-09   \\
			& 0.3     &   0.69 & 38 &   1.59e-09 &   1.31 & 79 &   9.22e-09   \\
			& 0.6      &   0.80 & 27 &   4.53e-09 &   1.14 & 41 &   7.24e-09    \\ \hline
		\end{tabular}
		\caption{Performance on the real-world datasets of Table \ref{table:synthetic_data}\label{table:real_world_results}}
	\end{table}
{Following the construction proposed in  \cite{benson2015tensor,gleich2015}, we transform $\mathcal C$ into a new stochastic tensor $\mathcal R$ as follows.
			Let $S$ be   the mode-one unfolding of $\mathcal{C}$ that has been normalized to be a sub-stochastic matrix by scaling its columns, let $A$ be the adjacency matrix of $G$ and let $D$ be the diagonal matrix of the degrees of $G$, $D_{ii} = \sum_j A_{ij}$. Finally, let $M=A^TD^{+}$, where $D^{+}$ is the Moore-Penrose pseudoinverse of $D$. We define $\mathcal R$ as the order-3 tensor whose mode-1 unfolding is 
			\begin{equation}
			R=\beta[S+\mathbf{v}\, \mathrm{dangling}(S)]+(1-\beta)[M+\mathbf{v}\, \mathrm{dangling}(M)]\otimes\mathbf{e}\, ,
			\end{equation}
			where $0\leq\beta\leq1$,  $\mathbf{v}\in \Omega_+$ is a positive stochastic vector and where, for a  matrix $B \in \mathbb{R}^{n \times n}$ and the all-ones vector $\mathbf{e}$,  we let  $\mathrm{dangling}(B):=\mathbf{e}^T-\mathbf{e}^TB$.}

		We report in Table \ref{table:synthetic_data} the details of the graphs we took into account; when the graph $G$ is directed -- marked with an asterisk  -- we considered its undirected version: if $A$ is the adjacency matrix of $G$,  we consider the graph $G'$ corresponding to the adjacency matrix $A'=\mathrm{sign}(A+A^T)$, whose entries are $(A')_{ij} = \mathrm{sign}(a_{ij}+a_{ji})$.	
		In Table \ref{table:real_world_results} we report the obtained numerical results when $\alpha=0.99$, $\gamma=1$,  $2k=8$ if $\beta=0.6$, $2k=18$ if $\beta=0.3$ and $2k=30$ if $\beta=0.1$.

		{Let us stress that the execution times we report here include the running time of the extrapolation routines.
		It is possible to clearly appreciate the computational benefits resulting from the introduction of the extrapolation techniques: the computational times and the number of iterations needed to produce a residual not larger than $10^{-8}$ are always reduced; for some  problems, for which the shifted fixed-point method is particularly slow ($\beta=0.1$), the iterations and the execution times reduce dramatically, up to the 230\%, obtained for the dataset ``yeast''. We highlighted in bold the most relevant speed-up performance in Table \ref{table:real_world_results}.}

\section{Conclusions}
		We showed that the use of extrapolation techniques for the computation of the multilinear {P}age{R}ank using fixed-point iterations improves substantially the efficiency and effectiveness on the test problems we considered at a limited additional cost.  In particular, we have shown that extrapolated versions of the shifted power and inner-outer methods are able to solve certain pathological test problems where the original methods consistently fail. Moreover, the performance of the methods (in terms of execution time) remarkably improves, allowing to address very large problems.  Finally, let us point out that, as it is formulated, the multilinear {P}age{R}ank problem is equivalent to the computation of a $Z$-eigenvector of a stochastic tensor for which the shifted power  method introduced in \cite{kolda2011shifted} can be used. In that work the authors observed that the shifted power method could suffer of an extremely low rate of convergence affecting  its employment in large scale applications. For this reason, they  have raised the issue regarding the possibility of suitably speeding-up the shifted power method. The  numerical results obtained and presented in this work allow us to answer positively to their question when the shifted power method is used for the computation of the multilinear {P}age{R}ank solution and is coupled with the Simplified Topological $\varepsilon$-algorithm,  encouraging the numerical community to use the proposed techniques when dealing with problems similar to those presented here. 
		We intend to come back to the theoretical analysis concerning the justification of the remarkable acceleration performance obtained here in a forthcoming work.
	
\section*{Acknowledgments}		
S.C. and M.R.-Z. are members of the INdAM Research group GNCS, which partially supported this work. \\The work of F.T. was funded by the European Union's Horizon 2020 research and innovation programme under the the Marie Sk\l odowska-Curie Individual Fellowship ``MAGNET'' no.\ 744014.

\bibliographystyle{abbrv}
\bibliography{Mult_pr}
\end{document}